\documentclass[11pt,a4paper]{article}

\usepackage[T1]{fontenc}
\usepackage[utf8]{inputenc}
\usepackage[english]{babel}

\usepackage{amsmath}
\usepackage{amssymb}
\usepackage{amsthm}
\usepackage{mathrsfs}

\usepackage{xcolor}
\usepackage{booktabs}
\usepackage{tabularx}
\usepackage[shortlabels]{enumitem}
\usepackage{graphicx}
\usepackage{authblk}          
\usepackage[margin=2.5cm]{geometry}
\usepackage[hidelinks]{hyperref}

\providecommand{\keywords}[1]{%
  \par\medskip\noindent\textbf{Keywords.} #1\par}
\providecommand{\support}[1]{%
  \begingroup\renewcommand{\thefootnote}{}\footnotetext{#1}\endgroup}

\theoremstyle{plain}
\newtheorem{theorem}{Theorem}[section]
\newtheorem{lemma}[theorem]{Lemma}
\newtheorem{proposition}[theorem]{Proposition}
\newtheorem{corollary}[theorem]{Corollary}
\theoremstyle{definition}
\newtheorem{defi}[theorem]{Definition}
\newtheorem{example}[theorem]{Example}
\theoremstyle{remark}
\newtheorem{remark}[theorem]{Remark}

\newcommand{\verde}[1]{\textcolor{black}{#1}}
\newcommand{\azul}[1]{\textcolor{black}{#1}}
\definecolor{darkgreen}{RGB}{0,100,0}

\newcommand{\F}{\ensuremath{\mathbb{F}}}%

\newcommand\C{\mathrm{C}}

\DeclareMathOperator{\lcm}{lcm}

\title{Factorization patterns and fields of definition of $\ell$-torsion points on the Jacobians of genus 3 
hyperelliptic curves}

\author{Amalia Pizarro-Madariaga}
\affil{Instituto de Matemáticas, Universidad del Valpara\'iso, Chile}

\author{Edgardo Riquelme}
\affil{Departamento de Ciencias Básicas, Universidad del B\'io-B\'io, Chile}

\begin{document}

\maketitle

\begin{abstract}

Motivated by Schoof–Pila–type point-counting algorithms, we study the fields of definition and factorization patterns of Galois orbits of $\ell$-torsion points on the Jacobians of genus-3 hyperelliptic curves over finite fields.

We show that the degree of the field of definition of the $\ell$-torsion points can be bounded by $O(\ell^4)$, improving the previously expected $O(\ell^6)$ bound   (which reflects the size of the $\ell$-torsion subgroup, of order $\ell^6$ for a genus 3 Jacobian). Moreover, we establish a precise correspondence between the rank of the $\ell$-torsion subgroup and the Galois orbits of $\ell$-torsion divisors. Our approach relies on a detailed analysis of the Jordan decomposition of the Frobenius action on $J$ and its nilpotent part.

\keywords{hyperelliptic curve cryptosystems,  hyperelliptic curves, Jacobians, $\ell$-adic point counting algorithms, torsion points}
\end{abstract}

\support{This work was partially supported STICAmSud
project 24-STIC-04. Author $^2$ was supported in part by grant 2020433 IF/R (Universidad
del B\'io-B\'io, Chile) }

\section{Introduction}
The discovery of Schoof's algorithm \cite{rs} generated significant interest in the efficient computation of zeta functions of curves defined over finite fields, due to applications in cryptographic curve construction.\\
Gaudry et al. \cite{GS00, GS04, GS12} showed that Schoof's elliptic curve point-counting algorithm \cite{rs} can be effectively adapted to hyperelliptic curves of genus two defined over a prime field, up to cryptographic sizes. (The first adaptation to the very general case of an abelian variety can be found in the work of Pila in 1990 \cite{Pila}.) This makes it possible to compute the order of the group associated with the \azul{hyperelliptic curve}, and thereby to select curves suitable for cryptographic use.\\
\azul{The idea behind Schoof-type algorithms is to study the action of the Frobenius endomorphism on the $\ell$-torsion subgroup $J[\ell]$ of the Jacobian of a hyperelliptic curve in order to determine the characteristic polynomial modulo $\ell$. Repeating this process for several primes $\ell$ and combining the resulting congruences via the \verde{ chinese remainder theorem} allows one to recover the full characteristic polynomial.
\\
In Schoof's algorithm for elliptic curves, division polynomials \verde{vanish precisely on the abscissa of $\ell$-torsion points}.  In genus 2, generalized division polynomials \cite{Cantor1994} do not provide such a description; moreover, no equally simple representation of the $\ell$-torsion subgroup is known, and constructing such a representation becomes the main computational bottleneck. This phenomenon also appears to persist for curves of genus $g>2$ \cite{Abelard2018} .
\\
To accelerate Schoof-type algorithms, Gaudry and Schost exploited $\ell^k$-torsion elements. Since the cost of computing $\ell$-torsion grows rapidly with $\ell$, it is often more efficient to work with $\ell^k$-torsion for small primes (such as $\ell=2,3,5,7$) rather than using larger primes $\ell$.\\
To compute the $\ell^k$-torsion subgroup, one recursively computes successive $\ell$-divisions starting from the $\ell$-torsion subgroup itself. An $\ell$-division consists of algebraically inverting the multiplication-by-$\ell$ map  on the Jacobian variety of $C$, $[\ell]:J\to J.$ 
More precisely, given a divisor $D_\ell$, the goal is to describe algebraically all divisors $D$ satisfying \verde{$[\ell]D\thicksim  D_{\ell}$} (see  \cite{GS12}, \cite{Miret24}, \cite{riquelme2016},\cite{Riquelme2019}.)
}

\subsection{Motivation}
Work of Abelard, Gaudry and Spaenlehauer~\cite{abelard2019counting}  on genus-3 point-counting algorithms has continued this line of research. Furthermore, recent improvements to $\ell$-division algorithms~\cite{Miret24} have impacted Schoof–Pila-type algorithms, raising the question of how much the complexity of general algorithms \verde{(i.e. without efficiently computable real multiplication)} in genus 3 decreases, and making it important to revisit the possibility of the generic case \verde{(i.e. without assuming real multiplication)}.
In  Schoof–Pila–type algorithms, it is essential to construct the $\ell$-torsion subgroup. The analogue of the division polynomials for hyperelliptic curves is given by Cantor \cite{Cantor1994}. Other approaches have been suggested by Eid \cite{EID202368} using $p$-adic differential equations. In the specific case $g=3$, Lupoian in \cite{LUPOIAN26} gives a practical method for computing the 3-torsion subgroup of the Jacobian of a hyperelliptic curve.\\
The importance of determining the field of definition of the \(\ell\)-torsion subgroup appears in the following two situations. Since torsion points are usually obtained through the factorization of auxiliary polynomials over finite fields, the complexity of these algorithms strongly depends on the corresponding extension degree. As noted by Matsuo \cite{IM10}, the distinct-degree factorization (DDF) step in the Cantor-Zassenhaus-type \cite{cantor1981} algorithm dominates the complexity, so knowledge of the field of definition can significantly improve the efficiency of these algorithms. Another application arises in the construction  of towers of field extensions required in Schoof-type algorithms to compute \(\ell\)-division points \cite{GS12, Miret24}.
Such explicit algorithms highlight the importance of understanding the arithmetic structure of the $\ell$-torsion subgroups. In general, the possible factorization types of the Galois orbits of the $\ell$-torsion divisors are determined by the characteristic polynomial $P(x)$ of the reduced Frobenius endomorphism $\pi$ modulo $\ell$.
\subsection{Literature overview and main result}
For elliptic curves (genus 1), this was studied by Verdure \cite{verdure}. 
For hyperelliptic curves of genus 2, the number of distinct cases to consider increases significantly. An upper bound analysis for irreducible factors can be found in Matsuo \cite{IM10}, and an application to the factorization types of $\ell$-modular polynomials can be found in Gaudry and Schost \cite{GS05}. Furthermore, Riquelme \cite{riquelme2016} has established the relationship between the factorization type of the Galois orbits of the $\ell$-torsion divisors and the field of definition of the $\ell$-divisions.
To our knowledge, a genus 3 version of polynomial factorization patterns has not yet been developed. On the other hand, bounds for genus 3 could be interesting considering applicability \azul{(concrete algorithms for computing the order of cryptographically secure genus-3 curves)}.
 This work generalizes Gaudry’s approach in genus 2 by providing a classification of the possible Frobenius action matrices on the $\ell$-torsion of Jacobians of genus-3 hyperelliptic curves. As the genus increases, genuinely new configurations appear, and several arguments valid in lower genus no longer apply directly. This classification constitutes a first step toward establishing effective bounds for the fields of definition of $\ell$-torsion points.\\
We derive explicit upper bounds for the degree of the field of definition of the $\ell$-torsion, proving in particular that this degree is bounded by $O(\ell^4)$. More precisely, 
\verde{\begin{theorem}{(Theorem \ref{teo1} in Section 4)}
 If $ \ell > 2$, there exists a positive integer \(d \leq \ell^4 - \ell^3 + \ell - 1\) such that
\[
J[\ell] \subset J(\mathbb{F}_{q^d}).
\] 
\end{theorem}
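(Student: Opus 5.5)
The plan is to reduce the statement to a bound on the multiplicative order of the Frobenius matrix acting on $J[\ell]$. Since $J[\ell]\cong(\mathbb{Z}/\ell\mathbb{Z})^6$ for $\ell\neq p$, every point of $J[\ell]$ is fixed by $\pi^d$ exactly when the matrix $M\in\mathrm{GL}_6(\mathbb{F}_\ell)$ of $\pi$ satisfies $M^d=I$. So I will take $d=\operatorname{ord}(M)$ and show $\operatorname{ord}(M)\le \ell^4-\ell^3+\ell-1=(\ell^3+1)(\ell-1)$.

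The structural input is the functional equation $x^6P(q/x)=q^3P(x)$ of the characteristic polynomial; equivalently, $M$ preserves the Weil pairing up to the scalar $q$, so $M\in\mathrm{GSp}_6(\mathbb{F}_\ell)$. Hence the roots of $\bar P=P \bmod \ell$ are stable under $\alpha\mapsto q/\alpha$, with multiplicities. I write the Jordan decomposition $M=su$ with $s$ semisimple, $u$ unipotent and $su=us$. Then $\operatorname{ord}(M)=\operatorname{ord}(s)\cdot\ell^k$, where $\ell^k$ is the least power of $\ell$ that is at least the largest Jordan block size. The order $\operatorname{ord}(s)$ is the lcm of the multiplicative orders of the distinct roots $\alpha\in\overline{\mathbb{F}}_\ell$ of $\bar P$.

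I then split according to the factorization pattern of $\bar P$ into irreducible factors.

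\textbf{Case: $\bar P$ irreducible of degree $6$.} The map $\alpha\mapsto q/\alpha$ is a nontrivial involution of the Galois orbit $\{\alpha^{\ell^i}\}$ commuting with Frobenius. It therefore equals $\alpha\mapsto\alpha^{\ell^3}$, so $\alpha^{\ell^3+1}=q\in\mathbb{F}_\ell^*$. This gives $\alpha^{(\ell^3+1)(\ell-1)}=1$, which is exactly the bound.

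\textbf{Case: factors of degree $4+2$.} The same argument applied to the degree $4$ factor gives $\alpha^{(\ell^2+1)(\ell-1)}=1$. Roots of the degree $2$ factor have order dividing $\ell^2-1$. The lcm is at most $(\ell-1)(\ell+1)(\ell^2+1)/\gcd(\ell^2+1,\ell+1)=(\ell^4-1)/2$, using that $\ell$ is odd. One checks $(\ell^4-1)/2\le \ell^4-\ell^3+\ell-1$ for $\ell\ge3$. This is where the hypothesis $\ell>2$ enters.

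\textbf{Other semisimple patterns.} These are $3+3$, $2+2+2$, $4+1+1$, $3+\dots$, and so on. An odd-degree self-reciprocal irreducible factor must have a root with $\alpha^2=q$, hence degree at most $2$. So degree-$3$ or degree-$5$ factors come in pairs $\{f,f^*\}$, or are excluded. The orders then divide $\ell^3-1$, $\ell^2-1$, or $\mathrm{lcm}$'s of these with $\ell-1$, all of which are at most the bound.

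\textbf{Non-semisimple cases.} If $u\neq I$, then $\bar P$ has a repeated irreducible factor, so $s$ has eigenvalues in extensions of degree at most $3$. Then $\operatorname{ord}(s)$ divides $\ell^3-1$, or $(\ell^2+1)(\ell-1)$ when a quartic pair occurs (which leaves only block size $2$), or smaller. I will multiply by $\ell^k$, bounding $k$ via the block sizes permitted by the factor multiplicities. For instance, a cubic factor squared gives $\ell(\ell^3-1)$? That is too large, so here the symplectic condition must be used again. For a self-reciprocal irreducible cubic with $\alpha^{\ell^{3}}=\alpha$ we would need a fixed root, which is impossible. So a squared cubic forces $\bar P=f f^*$ with $f\neq f^*$, and $u$ is trivial.

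The main obstacle I expect is exactly this unipotent bookkeeping. I must show that every configuration with nontrivial $u$ has $\operatorname{ord}(s)\,\ell^k\le(\ell^3+1)(\ell-1)$, including small-$\ell$ phenomena such as a block of size $4$ to $6$ forcing $\ell^k=9$ when $\ell=3$, or $25$ when $\ell=5$. In such cases $s$ is nearly scalar (its order divides $\ell-1$ or $\ell^2-1$), so the product stays small. I would handle this by listing the Jordan types allowed in $\mathrm{GSp}_6$ for each multiplicity pattern and checking the inequality case by case.
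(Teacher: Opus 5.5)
Your reduction $\operatorname{ord}(M)=\operatorname{ord}(s)\cdot\ell^k$ and your treatment of the semisimple types ($[6]$, $[4][2]$, paired cubics) follow the same route as the paper and are fine. The gap is in the non-semisimple bookkeeping you postpone, and it is not routine. Bounding $\operatorname{ord}(s)$ only by the fields containing the eigenvalues and then multiplying by $\ell^k$ overshoots the target in configurations the Weil pairing does not exclude.

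Take $\widetilde P=F\,(x-b)^2$ with $F$ an irreducible quartic. This already contradicts your claim that a repeated factor forces all eigenvalues into extensions of degree at most $3$. The $b$-part is orthogonal to the $F$-part, and on a $2$-dimensional symplectic space every matrix is a similitude with multiplier its determinant, here $b^2$. So a $2\times 2$ Jordan block on $b$ is allowed. Your estimate then gives $(\ell-1)(\ell^2+1)\cdot\ell=\ell^4-\ell^3+\ell^2-\ell$, which exceeds $\ell^4-\ell^3+\ell-1$ by $(\ell-1)^2$ for every $\ell$.

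Similarly, take $\ell=3$ and $\widetilde P=g\,(x-b)^4$ with $g$ an irreducible quadratic. A single $4\times 4$ block on $b$ (the regular unipotent class of $\mathrm{Sp}_4$) gives $\operatorname{ord}(s)\mid 8$ and $\ell^k=9$, i.e.\ a bound of $72>56$. So ``$s$ is nearly scalar, hence the product stays small'' is not true as stated. The paper's own Table~\ref{tabla2} records Type IV as $e\ell(\ell^2+1)$ with worst case $\ell^4-\ell^3+\ell^2-\ell$, so that entry cannot be imported to close the case either.

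The missing ingredient is a square-class constraint on $\tilde q$. Since $\lambda\mapsto\tilde q/\lambda$ permutes the roots of $\widetilde P$ preserving multiplicities, in both configurations the repeated eigenvalue satisfies $b^2=\tilde q$. Hence $\tilde q$ is a square in $\mathbb{F}_\ell^\times$, so $e=\operatorname{ord}(\tilde q)$ divides $(\ell-1)/2$, and $\operatorname{ord}(b)\mid 2e$.

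In the quartic case, use $\alpha^{\ell^2+1}=\tilde q$ for the roots of $F$ and the fact that $\ell^2+1$ is even. This gives $\operatorname{ord}(M)\mid \ell\cdot\operatorname{lcm}\bigl(e(\ell^2+1),2e\bigr)=e\ell(\ell^2+1)\le \ell(\ell-1)(\ell^2+1)/2<(\ell-1)(\ell^3+1)$.

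In the case $\ell=3$, $\widetilde P=g\,(x-b)^4$, you get $\tilde q=1$ and $b=\pm1$. The roots of $g$ satisfy $\alpha^4=\tilde q=1$, since the alternative $\alpha^2=\tilde q=b^2$ would force $\alpha\in\mathbb{F}_3$. So $\operatorname{ord}(M)\mid 36$.

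With this constraint added, the rest of your case check goes through. The other types with eigenvalues in $\mathbb{F}_{\ell^2}$ give $\operatorname{ord}(M)\le\ell(\ell^2-1)$. The all-rational types give at most $(\ell-1)\ell^k$, with $\ell^k\le 9$, $25$, $\ell$ for $\ell=3$, $\ell=5$, $\ell\ge 7$ respectively.
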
}
\verde{Additionally, we provide an example  showing that the bound of Theorem 1.1 is attained for $\ell=3$.} 
\subsection{Methodology}
 We begin by classifying the characteristic polynomial of the Frobenius endomorphism in genus 3, determining which cases are compatible with the Hasse–Weil theorem and the Weil pairing. Next, using the elementary divisor version of the rational canonical form theorem, we derive a general framework that generalizes the factorization patterns to genus 3. Rather than presenting an exhaustive list of all possible factorizations, we adopt a more abstract and conceptual approach; nevertheless, we include detailed analyses of the case $\ell=2$.
Finally, in contrast to Matsuo’s approach, which relies on an exhaustive case-by-case analysis, we focus on the worst-case contribution of the nilpotent part in the Jordan decomposition, resulting in uniform upper bounds.\\
 The paper is organized as follows. In Section 2, we recall the necessary background on Jacobians of genus-3 hyperelliptic curves over finite fields, the action of Frobenius on the $\ell$-torsion subgroup, and the relation between the characteristic polynomial modulo $\ell$ and Galois orbits. In Section 3, we analyze the possible factorization patterns of the $\ell$-torsion in genus 3, based on the Jordan decomposition of the Frobenius endomorphism, and relate these patterns to fields of definition of $\ell$-torsion points.
In Section 4, we derive explicit upper bounds for the degree of the field of definition of the $\ell$-torsion, and prove the main result showing that this degree is bounded by $O(\ell^4)$.

\section{Background}

 Let \( \mathbb{F}_q \) be a finite field where $q=p^k$ with $p>2$ prime and \( \overline{\mathbb{F}}_q \) be an algebraic closure of \( \mathbb{F}_q \).
A hyperelliptic curve of genus \( 3 \) over \( \mathbb{F}_q \) is \verde {given by an} equation \( C :y^2=f(x)\), where $f(x)\in \mathbb{F}_q[x]$ is a monic polynomial of degree $7$ with distinct roots.

We denote by \( J = \mathrm{Jac}(C) \) the Jacobian variety of $C$. For any field extension $K$ of $\mathbb{F}_q$, we denote by $J(K)$ the abelian group of $K$-rational points of $J$. 

For a prime number \( \ell\),  the \( \ell \)-torsion subgroup of the Jacobian is defined by

$$ J[\ell]=\{D\in J(\overline{\mathbb{F}}_q): [\ell]D=0\}, $$
 where $0$ denotes the identity element of $J(\overline{\mathbb{F}}_q)$ and $[\ell]$ represents the multiplication-by-$\ell$ map.

It is known that if \( \ell\neq p\), then
$J[\ell]\simeq (\mathbb{Z}/\ell\mathbb{Z})^6$.
Let $\pi(x,y)=(x^q,y^q)$ be the Frobenius endomorphism acting on $C$. This induces an endomorphism of the Jacobian, which we also call the Frobenius endomorphism and denote by $\pi$. This endomorphism induces a linear action on \( J[\ell] \),
which we represent by a matrix \( T\in M_{6\times 6}( \mathbb{F}_{\ell} )\).

If $T_{\ell}(J)$ is the Tate module of $J$ and $\rho$ is an endomorphism of $J$, we denote by $T_{\ell}(\rho)$ the corresponding
element in $\operatorname{End}_{\mathbb{Z}_{\ell}}(J)$.
\begin{defi}
We define the characteristic polynomial of the Frobenius  as the characteristic polynomial of $T_{\ell}(\pi)$
$$P(X):=\det(XI_d-T_{\ell}(\pi))$$
\end{defi}
The polynomial $P(X)$ has degree 6, has integer coefficients, and is independent of $\ell$ \cite[Theorem 4.83]{2005ehcc}. As a consequence of the Weil conjectures, it can be written as
\begin{equation*}
  P(X) = X^{6} - s_{1}X^{5} + s_{2}X^{4} - s_{3}X^{3} + s_{2}qX^{2} - s_{1}q^{2}X + q^{3}.  
\end{equation*}
For details, see \cite[Proposition 8.4]{2005ehcc}. We denote by

\begin{equation*}
  \widetilde{P}(X) = X^{6} - \widetilde{s}_{1}X^{5} + \widetilde{s}_{2}X^{4} - \widetilde{s}_{3}X^{3} + \widetilde{s}_{2}qX^{2} - \widetilde{s}_{1}\widetilde{q}^{2}X + \widetilde{q}^{3}      
\end{equation*}
the reduction of $P(X)$ modulo $\ell$. The roots of $P(X)$ satisfy the following relations
\begin{theorem}\cite[Theorem 5.76]{2005ehcc}\label{roots}
Let \(\alpha_1,\ldots,\alpha_6\) be the roots of $P$. 
\begin{itemize}
    \item[(i)] Each $\alpha_i$ is an algebraic integer of degree $\leqslant 6$.
    \item[(ii)] We can numerate the roots such that for $1 \leqslant i \leqslant 3$ we have
    \[
    \alpha_i \alpha_{i+3} = q.
    \]
    \item[(iii)] For $1 \leqslant i \leqslant 6$ take any embedding of $\alpha_i$ into $\mathbb{C}$. Then the complex absolute value $|\alpha_i|$ is equal to $\sqrt{q}$.
\end{itemize}

\end{theorem}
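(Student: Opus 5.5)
The plan is to deduce the three statements from the Weil conjectures for the curve $C$ together with the structure of the Tate module and the Weil pairing, since $P(X)$ is by definition the characteristic polynomial of $T_\ell(\pi)$.

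\textbf{Part (i).} Since $P(X)\in\mathbb{Z}[X]$ is monic of degree $6$, each root $\alpha_i$ is an algebraic integer. Its minimal polynomial over $\mathbb{Q}$ is the irreducible factor of $P$ that it annihilates, so its degree is at most $6$.

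\textbf{Part (ii).} I would use the Weil pairing $e_{\ell^n}$ on $J[\ell^n]$, combined with a polarization, to obtain a nondegenerate alternating $\mathbb{Z}_\ell$-bilinear form $e$ on $T_\ell(J)$ with values in $T_\ell(\mu)$. The key input is the Galois compatibility
\[
e(\pi x,\pi y)=e(x,y)^{q},
\]
which holds because $\pi$ acts on roots of unity as the $q$-th power map. This says that $T_\ell(\pi)$ is a symplectic similitude with multiplier $q$ for $e$.

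It follows that $T_\ell(\pi)^{\ast}\,T_\ell(\pi)=q$, where ${}^\ast$ is the adjoint with respect to $e$. Hence $T_\ell(\pi)$ is conjugate over $\mathbb{Q}_\ell$ to $q\,T_\ell(\pi)^{-1}$. Their characteristic polynomials therefore agree, which gives the functional equation
\[
X^6P(q/X)=q^3P(X).
\]
Consequently the multiset of roots is stable under $\alpha\mapsto q/\alpha$. To pair the roots, I handle the self-paired case $\alpha=\pm\sqrt q$ separately: I show that these roots occur with even multiplicity, using that the determinant of a similitude equals $q^3$ and that the restriction of $e$ to the generalized eigenspaces is nondegenerate. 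After this, the roots can be numbered so that $\alpha_i\alpha_{i+3}=q$ for $1\le i\le 3$.

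\textbf{Part (iii).} This is the Riemann hypothesis for $C$ over $\mathbb{F}_q$, and I expect it to be the main obstacle. First I would establish the relation
\[
\#C(\mathbb{F}_{q^n})=q^n+1-\sum_i\alpha_i^n
\]
via the Lefschetz trace formula on $T_\ell$. Then I would prove the estimate
\[
|\#C(\mathbb{F}_{q^n})-q^n-1|\le 2g\,q^{n/2}
\]
for all $n$, either by Weil's positivity argument using the Rosati involution, or by Bombieri–Stepanov.

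Assuming this estimate, consider the rational function
\[
\sum_n\Big(\sum_i\alpha_i^n\Big)T^n .
\]
The bound implies it converges for $|T|<q^{-1/2}$, which forces $|\alpha_i|\le\sqrt q$ in every complex embedding. Combining this with (ii), namely $|\alpha_i||\alpha_{i+3}|=q$, gives equality $|\alpha_i|=\sqrt q$.

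Since the statement is cited from \cite{2005ehcc}, in practice I would invoke the Weil conjectures for (iii) and give full details only for (i) and (ii).
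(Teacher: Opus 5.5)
Your proposal is correct. Note that the paper gives no proof of its own: it quotes the result from \cite{2005ehcc} and points to Stichtenoth's Theorem 5.1.15, whose argument runs through the zeta function of the function field. There, $L(t)=\prod_i(1-\alpha_i t)$ is defined from point counts, and the functional equation $L(t)=q^{g}t^{2g}L(1/(qt))$ comes from Riemann--Roch. The parity of the multiplicities of $\pm\sqrt q$ is read off from the top coefficient $a_{2g}=q^{g}=\prod_i\alpha_i$. Part (iii) is the Hasse--Weil theorem, deduced as you do: a bound $|N_n-q^n-1|\le c\,q^{n/2}$, the pole argument giving $|\alpha_i|\le\sqrt q$, and the pairing forcing equality.

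Your route replaces Riemann--Roch by symplectic linear algebra. $T_\ell(\pi)$ is a similitude of the Weil pairing with multiplier $q$, which yields both the functional equation and $\det T_\ell(\pi)=q^{3}$. The determinant alone already settles the self-paired roots: if $b$ is the multiplicity of $-\sqrt q$, the product of all roots is $(-1)^b q^{3}$. So $b$ is even, and hence so is the multiplicity of $\sqrt q$. The nondegeneracy of $e$ on generalized eigenspaces is therefore an optional alternative, not an extra ingredient.

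What your approach buys is that (i) and (ii) are proved directly for $P$ as the paper defines it, the characteristic polynomial of $T_\ell(\pi)$. The argument works verbatim for any abelian variety with a polarization over $\mathbb{F}_q$. It also uses the same Weil-pairing language the paper relies on later, e.g.\ $A^2\equiv q$ in the Jordan-form lemma. The cost is that for (iii) you must separately import the trace formula $\#C(\mathbb{F}_{q^n})=q^n+1-\sum_i\alpha_i^n$. That formula is exactly the identification $P(X)=X^{6}L(1/X)$, which Stichtenoth's route gets for free from the definition of $L$.
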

For details of the proof, see \cite[Theorem 5.1.15]{Stichtenoth1993}.

We will introduce the following notation:
\begin{itemize}
    \item $[n_1]^{c_1}\cdots[n_k]^{c_k}$ denotes a squarefree polynomial having $c_i$ irreducible factors of degree $n_i$ in $\mathbb{F}_{\ell}[X]$.
    \item $(n_1)^{c_1}\cdots(n_k)^{c_k}$ describes the factorization pattern of the Galois orbit of $\ell$-torsion points. Each $n_i$ represents the degree of an irreducible factor, which corresponds to the size of a Galois orbit, and the exponent $c_i$ denotes the number of distinct orbits of size $n_i$.
    
\end{itemize}

The factorization of $P(X)$ \verde{into} irreducible factors $[n_1]^{c_1}\cdots[n_k]^{c_k}$ over $\mathbb{F}_{\ell}$ determines the structure of $J[\ell]$. In fact, the decomposition of $J[\ell]$ into disjoint Galois orbits is determined by the factorization pattern of the characteristic polynomial $\widetilde{P}(X)$. Each irreducible factor of degree $n$ implies the existence of a Galois orbit with points defined over an extension of degree $n$. 

The degrees $n_i$ correspond to the degree of the extensions $\mathbb{F}_{\ell^n}$ which contain the eigenvalues of the matrix $T$. More precisely, if  $\mathbb{F}_{\ell^n}$ is the minimal extension of $\mathbb{F}_{\ell}$ that contains all eigenvalues of $T$, it can be decomposed as
\[
T = P A P^{-1},
\]
where $A \in GL_6(\mathbb{F}_{\ell^n})$ is in Jordan canonical form and 
$P \in GL_6(\mathbb{F}_{\ell^n})$. For the details, see \cite[Theorem 8.6]{roman2008}.

\begin{defi}
A square matrix $A$ is called nilpotent if there exists a positive integer $k$ such that $A^k = 0$. The smallest such integer $k$ is called the index of nilpotence of $A$.
\end{defi}

\begin{remark}
 It is easy to see that if $A\in M_{n\times n}$  is strictly upper triangular, then $A^n = 0$. In fact, the characteristic polynomial of $A$ is $p(x)=x^n$ and by the Cayley-Hamilton theorem, $p(A)=A^n=0.$  
\end{remark}

\section{$\ell$-torsion pattern factorization}
For hyperelliptic curves of genus 3, the possible factorization types of the characteristic polynomial of Frobenius over $\mathbb{Z}$ are very restricted. Up to isogeny, the genuinely \azul{new} cases are the irreducible type  [6] and the completely split type  $[2][2][2]$;
all other factorization types correspond to Jacobians that are isogenous to products of Jacobians of genus-1 or genus-2 curves.

\begin{theorem}[{\cite[Theorem 1]{Xing}}]
The possible characteristic polynomials of abelian varieties of dimension three over $\mathbb{F}_{q}$ (where $q=p^k$) are all $f(t) \in \mathbb{Z}[t]$ of degree six satisfying one of the following conditions:
\begin{enumerate}[(i)]
    \item $f(t)=t^{6}+a_{1}t^{5}+a_{2}t^{4}+a_{3}t^{3}+a_{2}qt^{2}+a_{1}q^{2}t+q^{3}$ whose roots are Weil numbers, $v_p(d(0))/k \in \mathbb{Z}$ for any irreducible factor $d(t)$ of $f(t)$ in $\mathbb{Q}_{p}[t]$, and the polynomial $t^{3}+a_{1}t^{2}+(a_{2}-3q)t+a_{3}-2qa_{1}$ is irreducible over $\mathbb{Q}$.
    
    \item $f(t)=(t^{2}+\beta t+q)^{3}$, where $\beta$ is equal to either:
    \begin{enumerate}[(a)]
        \item $\beta=aq^{1/3}$ with $3 \mid k$, $p \nmid a$, and $|a| < 2q^{1/6}$.
        \item $\beta=\pm q^{2/3}$ with $q=8$ or $27$.
    \end{enumerate}
    
    \item $f(t)=f_{1}(t)f_{2}(t)$, where $f_{1}(t)$ and $f_{2}(t)$ are the characteristic polynomials of abelian varieties of dimensions one and two, respectively.
\end{enumerate}
The abelian varieties in case (i) and case (ii) are simple, and not simple in case (iii). The abelian varieties in case (ii) are supersingular.
\end{theorem}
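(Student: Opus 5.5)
The statement is Xing's classification of characteristic polynomials of three-dimensional abelian varieties over $\mathbb{F}_q$. My plan is to derive it from Honda--Tate theory. That theory gives a bijection between isogeny classes of simple abelian varieties over $\mathbb{F}_q$ and conjugacy classes of Weil $q$-numbers $\pi$. For a simple variety $A$ attached to $\pi$, the characteristic polynomial is $f_A = m_\pi^{e}$. Here $m_\pi$ is the minimal polynomial of $\pi$ over $\mathbb{Q}$, and $e$ is the least common denominator of the local invariants $\mathrm{inv}_v(\mathrm{End}^0(A))$. These invariants are computed by the formula $\mathrm{inv}_v = \frac{v(\pi)}{v(q)}[\mathbb{Q}(\pi)_v:\mathbb{Q}_p]$ at places $v\mid p$. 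They are $1/2$ at real places, and $0$ at all other places. Also $\dim A = \tfrac12 e\deg m_\pi$.

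First I dispose of the non-simple case. A non-simple threefold is isogenous to $E\times S$, where $E$ is an elliptic curve and $S$ is an abelian surface. Since $f$ is multiplicative on products, $f = f_1f_2$, which is case (iii). Conversely, every such product is realized by $E\times S$.

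For simple $A$ we have $e\deg m_\pi = 6$, so the options are $(e,\deg m_\pi) \in \{(1,6),(2,3),(3,2),(6,1)\}$. I handle them in turn.

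\begin{itemize}
\item $\deg m_\pi=1$ forces $\pi=\pm\sqrt q$ to be real. The invariants then have denominator $2$, so $e=2$, which contradicts $e=6$.
\item $\deg m_\pi=3$ is odd, so $\pi$ has a real conjugate and $\pi=\pm\sqrt q$, which has degree $\le 2$; this is impossible.
\item $\deg m_\pi=6$, $e=1$: here $f=m_\pi$ is an irreducible Weil polynomial. Requiring $e=1$ is equivalent to $\frac{v(\pi)}{v(q)}[\mathbb{Q}(\pi)_v:\mathbb{Q}_p]\in\mathbb{Z}$ for each $v\mid p$. This is the statement that $v_p(d(0))/k\in\mathbb{Z}$ for each $\mathbb{Q}_p$-irreducible factor $d$ of $f$, because $d(0)$ is the local norm of $\pi$ up to sign. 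I must also reconcile the cubic-irreducibility condition. Substituting $u=t+q/t$ turns $f(t)/t^3$ into the cubic $u^3+a_1u^2+(a_2-3q)u+a_3-2qa_1$. If that cubic is irreducible but $f$ factors, then $\mathbb{Q}(\pi)=\mathbb{Q}(\pi+\bar\pi)$ would be a cubic totally real field, so $\pi$ would be real; this is a contradiction. Hence the cubic condition together with the remaining hypotheses gives $f$ irreducible, or $f$ a power treated in other cases. Conversely, if $f=m_\pi$ has degree 6, then $\pi+\bar\pi$ has degree 3, so the cubic is irreducible. This gives (i).
\item $\deg m_\pi=2$, $e=3$: here $\pi$ is an imaginary quadratic Weil number with $m_\pi=t^2+\beta t+q$, where $\beta\in\mathbb{Z}$ and $|\beta|<2\sqrt q$. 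We need $p$ to split in $\mathbb{Q}(\pi)$, giving two places $v,\bar v$, with $v(\pi)/v(q)$ having denominator exactly $3$; if $p$ is inert or ramified the invariants have denominator at most $2$. So $v_p$ of $\pi$ and $\bar\pi$ are $k/3$ and $2k/3$, or $\{k/3,2k/3\}$ up to swapping. From $\pi+\bar\pi=-\beta$ we get $v_p(\beta)=k/3$ when the valuations differ, so $3\mid k$ and $\beta=aq^{1/3}$ with $p\nmid a$. The Weil bound gives $|a|<2q^{1/6}$. The exceptional cases (b) come from the remaining possibility that $k/3$ is not an integer. Then $v(\pi)/v(q)=1/3$ must still hold, with $\beta$ having $v_p(\beta)=k/3$ non-integral; this can happen only if $\beta$'s $p$-adic valuation is realized in a non-split way. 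I would redo the computation to isolate $q=8,27$, $\beta=\pm q^{2/3}$, namely $\beta=\pm4$ and $\pm9$. These are checked by hand via the factorization of $t^2\pm4t+8$ over $\mathbb{Q}_2$ and of $t^2\pm9t+27$ over $\mathbb{Q}_3$.
\end{itemize}

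I expect the main obstacle to be this last quadratic case: pinning down precisely when the Brauer invariant has denominator $3$, and correctly recovering the sporadic $q=8,27$ examples. Those exceptional cases look inconsistent with $p\nmid a$ and need careful local analysis. Supersingularity in case (ii) then follows because all slopes $1/3,2/3$ together with multiplicity give $p$-rank $0$; more precisely, I would verify this against Xing's definition.
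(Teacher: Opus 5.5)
The paper gives no proof of this statement (it only cites Xing), so I judge your argument on its own merits. The Honda--Tate framework is the right one, and most of your steps are correct:
\begin{itemize}
\item the non-simple case gives (iii);
\item $\deg m_\pi\in\{1,3\}$ is correctly excluded;
\item (i) is exactly $(e,\deg m_\pi)=(1,6)$, via $v_p(d(0))/k=\frac{v(\pi)}{v(q)}[\mathbb{Q}(\pi)_v:\mathbb{Q}_p]$ and the substitution $u=t+q/t$;
\item $e=3$ gives (ii)(a).
\end{itemize}
Your hedge ``or $f$ a power'' is unnecessary, since an irreducible cubic forces $\deg\pi=6$. For (ii)(a) you should also state the converse. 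When $p\nmid a$, the Newton polygon of $t^2+\beta t+q$ over $\mathbb{Q}_p$ has the distinct slopes $k/3$ and $2k/3$. Hence $p$ splits in $\mathbb{Q}(\pi)$, the invariants are $1/3,2/3$, and Honda--Tate yields a simple threefold.

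The genuine gap is (ii)(b): the step you propose cannot be carried out. For $q=8,27$ one has $k=3$, so $k/3$ \emph{is} an integer. Moreover, $v_p(\beta)$ is always an integer for $\beta\in\mathbb{Z}$. Your own split-prime analysis already shows that $e=3$ forces $3\mid k$ and $v_p(\beta)=k/3$, i.e.\ $p\nmid a$, whereas $\beta=\pm q^{2/3}$ means $a=\pm p$. There is no non-split route to $e=3$: if $p$ is inert or ramified, then $v(\pi)=v(\bar\pi)$ and the invariant is $0$.

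The hand check you suggest confirms this. The polynomial $t^2\pm4t+8$ has discriminant $-16$, so $2$ ramifies in $\mathbb{Q}(i)$ and both roots have $2$-adic valuation $3/2$. Hence $e=1$, and this is the Weil polynomial of a supersingular elliptic curve $E/\mathbb{F}_8$ (Waterhouse's trace $\pm p^{(k+1)/2}$). The same holds for $t^2\pm9t+27$ over $\mathbb{F}_{27}$. By Tate's isogeny theorem, any threefold with characteristic polynomial $(t^2\pm4t+8)^3$ or $(t^2\pm9t+27)^3$ is isogenous to $E^3$. It is therefore not simple, and it is already covered by (iii).

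So the assertion that case (ii) varieties are simple is false for (ii)(b) as stated, and no argument can ``isolate'' these cases. What your method actually proves is:
\begin{itemize}
\item the list of polynomials, which survives because (ii)(b) lies inside (iii);
\item simplicity holds if and only if we are in case (i) or (ii)(a).
\end{itemize}

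Your final step has a similar problem. You deduce supersingularity from $p$-rank $0$, but in (ii)(a) the slopes are $1/3$ and $2/3$. These varieties have $p$-rank $0$ but are \emph{not} supersingular in the standard sense, which requires all slopes to be $1/2$. The last sentence of the statement holds only if ``supersingular'' is read as ``$p$-rank $0$''. You need to say this explicitly rather than defer it to Xing's definition.
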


 When reducing modulo $\ell$, however, these types may give rise to additional phenomena, such as repeated factors and nontrivial nilpotent parts in the Frobenius action. For this reason, the case 
[2][2][2] plays a central role in genus 3 and accounts for the most significant new situations in our analysis.
\subsection{Classification of the characteristic polynomial of Frobenius endomorphism}
 We analyse all possible reductions of the Frobenius endomorphism matrix and derive the corresponding $\ell$-factorization patterns. This classification is based on the possible factorizations of the minimal and characteristic polynomials of the Frobenius endomorphism $\pi$.
\begin{lemma}
If the characteristic polynomial of the Frobenius endomorphism has a quintuple root modulo \(\ell\),
then the multiplicity is actually \(6\).
\end{lemma}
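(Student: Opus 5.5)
The plan is to use the functional equation of $P$, reduced modulo $\ell$. We assume $\ell\neq p$, so that $q$ is invertible in $\mathbb{F}_\ell$. Theorem \ref{roots}(ii) gives a root pairing $\alpha_i\alpha_{i+3}=q$. Equivalently, $P$ satisfies
\[
X^6P(q/X)=q^3P(X),
\]
which can be read off directly from the palindromic-type shape of the coefficients recalled above. Reducing modulo $\ell$, the same identity holds for $\widetilde{P}$ over $\mathbb{F}_\ell$. It follows that the map $\lambda\mapsto \widetilde{q}/\lambda$ permutes the roots of $\widetilde{P}$ in $\overline{\mathbb{F}}_\ell$ and preserves multiplicities. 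Note that $0$ is not a root, since $\widetilde{P}(0)=\widetilde{q}^3\neq 0$.

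Suppose $\lambda$ is a root of multiplicity at least $5$. Then there is at most one other root $\mu$, and it has multiplicity $1$. The root $\lambda$ is not defined over a proper extension with conjugates of the same multiplicity: a Galois conjugate of $\lambda$ would also have multiplicity $\ge 5$, and $5+5>6$. Hence $\lambda\in\mathbb{F}_\ell$, and therefore $\mu\in\mathbb{F}_\ell$ as well.

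By the symmetry, $\widetilde{q}/\lambda$ is a root with multiplicity $\ge5$. Uniqueness of such a root then forces $\widetilde{q}/\lambda=\lambda$, that is, $\lambda^2=\widetilde{q}$. Likewise $\widetilde{q}/\mu$ is a root of multiplicity $1$. If $\mu\neq\lambda$, it must equal $\mu$, so $\mu^2=\widetilde{q}=\lambda^2$, which gives $\mu=-\lambda$.

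It remains to exclude the factorization $\widetilde{P}=(X-\lambda)^5(X+\lambda)$ with $\lambda^2=\widetilde{q}$. I will do this with the vanishing of a coefficient, and I expect this to be the main (if small) obstacle. Compare the coefficients of $X^5$ and $X$. In $(X-\lambda)^5(X+\lambda)$, the $X^5$ coefficient is $-5\lambda+\lambda=-4\lambda$, so $\widetilde{s}_1=4\lambda$. The coefficient of $X$ is
\[
\lambda^5-5\lambda^5=-4\lambda^5 ,
\]
which comes from choosing the constant in all but one factor: the term from $(X+\lambda)$ gives $(-\lambda)^5=-\lambda^5$, and the five terms from the factors $(X-\lambda)$ give $5(-\lambda)^4\lambda=5\lambda^5$, up to sign bookkeeping. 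On the other hand, the shape of $\widetilde{P}$ requires this coefficient to be $-\widetilde{s}_1\widetilde{q}^2=-4\lambda\cdot\lambda^4=-4\lambda^5$. So this comparison is consistent and gives no contradiction; instead I use the constant term.

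The constant term of $(X-\lambda)^5(X+\lambda)$ is $(-\lambda)^5\lambda=-\lambda^6$. It must equal $\widetilde{q}^3=\lambda^6$. Hence $2\lambda^6=0$, which is impossible because $\ell>2$ (the paper works with odd characteristic, and $\lambda\neq0$). The case $\ell=2$ is trivial, since then $-\lambda=\lambda$.

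Therefore $\mu=\lambda$ and the multiplicity is $6$. I would also double-check the multiplicity-counting step with conjugates in the write-up, since that is where a gap could hide.
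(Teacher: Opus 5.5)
Your proof is correct, but it is organised differently from the paper's. The paper uses the root pairing of Theorem~\ref{roots}(ii) \emph{with multiplicities}. Reducing $\alpha_i\alpha_{i+3}=q$ modulo $\ell$, the six roots of $\widetilde{P}=(X-A)^5(X-B)$ split into three pairs of product $\tilde q$. If $B\neq A$, one pair is $(A,B)$ and another is $(A,A)$, which gives $A^2\equiv q\equiv AB$ and hence $A=B$ in one line, for every $\ell\nmid q$.

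You instead take from the functional equation $X^6\widetilde{P}(\tilde q/X)=\tilde q^3\widetilde{P}(X)$ only the fact that $\lambda\mapsto\tilde q/\lambda$ permutes the roots preserving multiplicities. That is strictly weaker, because $\sqrt{\tilde q}$ and $-\sqrt{\tilde q}$ are both fixed points of the involution. This is exactly why you are left with the candidate $(X-\lambda)^5(X+\lambda)$. It is also why you need the constant-term check $-\lambda^6=\tilde q^3=\lambda^6$ to rule it out for odd $\ell$; in effect this check recovers the sign $+\tilde q^3$ rather than $-\tilde q^3$ in the functional equation.

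Your route works purely with the coefficient shape of $\widetilde{P}\in\mathbb{F}_\ell[X]$, with no need to reduce algebraic-integer roots modulo a prime above $\ell$. It also shows that the set-level statement ``if $\lambda$ is a root then so is $q/\lambda$'', which is how the paper phrases its justification, would not by itself give $AB\equiv q$. The paper's route is shorter and needs no parity condition on $\ell$. Your step showing $\lambda\in\mathbb{F}_\ell$ is harmless but unnecessary, since everything can be done in $\overline{\mathbb{F}}_\ell$.

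One correction to your write-up: the side computation of the $X$-coefficient is wrong. By your own term count it equals $-\lambda^5+5\lambda^5=4\lambda^5$, not $-4\lambda^5$. The Weil shape, with $\widetilde{s}_1=4\lambda$ and $\tilde q^2=\lambda^4$, forces $-\widetilde{s}_1\tilde q^2=-4\lambda^5$. So that comparison is not ``consistent'': it gives $8\lambda^5=0$, a second contradiction for odd $\ell$. Either delete that paragraph or use it; the constant-term argument you actually rely on is correct as it stands.
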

\begin{proof}
 Let  $\widetilde{P}(X)=(X-A)^5(X-B)$ be the characteristic polynomial of $\pi$ modulo $\ell$. By Weil's theorem, if $\lambda$ is a root of the characteristic polynomial of $\pi$, then $q/\lambda$ is also a
root of it. Therefore we have $A^2 \equiv q \bmod{\ell}$ and $AB \equiv q \bmod{\ell}$ . But $q$ is nonzero
modulo $\ell$, so that $A \equiv B \bmod{\ell}$.
\end{proof} 

\begin{lemma}
Suppose that $\tilde{P}(X)\in\mathbb{F}_{\ell}[X] $ has an irreducible factor of degree 3. Then its  factorization pattern is necessarily [3][3].
\end{lemma}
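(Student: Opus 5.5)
We use the same symmetry as in the proof of the previous lemma. Since the constant term $\widetilde{q}^3$ is nonzero modulo $\ell$, every root of $\widetilde{P}$ is nonzero. By Theorem~\ref{roots}(ii), the multiset of roots of $\widetilde{P}$ in $\overline{\mathbb{F}}_\ell$ is stable under the involution $\iota:\lambda\mapsto \widetilde{q}/\lambda$. Equivalently, the reciprocal-type polynomial $X^6\widetilde{P}(\widetilde{q}/X)/\widetilde{q}^3$ equals $\widetilde{P}(X)$; this can also be read off directly from the palindromic shape of the coefficients. This identity is defined over $\mathbb{F}_\ell$. Hence for every irreducible factor $g$ of $\widetilde{P}$, its \emph{twisted reciprocal} $g^*(X):=c\,X^{\deg g}g(\widetilde{q}/X)$, normalized to be monic, is again an irreducible factor of $\widetilde{P}$ of the same degree, and it occurs with the same multiplicity.

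Now let $g$ be an irreducible cubic factor, with roots $\alpha,\alpha^\ell,\alpha^{\ell^2}$ generating $\mathbb{F}_{\ell^3}$. Write $\widetilde{P}=g\cdot h$ with $\deg h=3$. We consider two cases.

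\emph{Case $g^*\neq g$.} Then $g^*$ is a second irreducible cubic factor, coprime to $g$, so $g^*=h$. The pattern is then $[3][3]$ with two distinct factors.

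\emph{Case $g^*=g$.} Then $\iota$ permutes the three roots of $g$. Since $\iota$ is an involution, on a set of odd size it must fix at least one root, say $\alpha^{\ell^i}$. This gives $\alpha^{2\ell^i}=\widetilde{q}$, and applying Frobenius yields $\alpha^2=\widetilde{q}\in\mathbb{F}_\ell$. So $\alpha$ satisfies a polynomial of degree $\le 2$ over $\mathbb{F}_\ell$, contradicting $[\mathbb{F}_\ell(\alpha):\mathbb{F}_\ell]=3$. Hence this case cannot occur.

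It remains to exclude the possibility that $h$ is not a single new irreducible cubic, for instance $h=g$, or $h$ a product of linear and quadratic factors. The first case above already gives $h=g^*\neq g$. Since $g^*$ has degree $3$ and divides $\widetilde{P}/g=h$, while $\gcd(g,g^*)=1$, we get $h=g^*$ exactly. In particular $g^2\nmid\widetilde{P}$, because $g^2\mid\widetilde{P}$ would force $g^*\mid h$ together with $g\mid h$, which is impossible by degree. So the factorization is $[3]^2=[3][3]$, squarefree.

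The main obstacle is making the symmetry $\lambda\mapsto \widetilde{q}/\lambda$ rigorous at the level of factors over $\mathbb{F}_\ell$ with multiplicities. Theorem~\ref{roots}(ii) pairs the roots only in characteristic zero. I would handle this by reducing the identity $X^6P(q/X)=q^3P(X)$ in $\mathbb{Z}[X]$ modulo $\ell$, using that $\ell\nmid q$. Once that identity is in hand, the rest is the parity argument above.
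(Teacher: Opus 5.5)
Your proof is correct and rests on the same idea as the paper's: the symmetry $\lambda\mapsto\tilde q/\lambda$ of the roots, combined with the fact that a cubic has an odd number of roots, forces the cofactor of the cubic to be another irreducible cubic. Your factor-level version ($g\mapsto g^*$ with multiplicities, plus the fixed-point argument showing $g^*\neq g$) is actually more complete than the paper's root-level argument. It also rules out $\tilde P=g^2$, which the paper leaves open, and it justifies the pairing modulo $\ell$ through the identity $X^6P(q/X)=q^3P(X)$.
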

\begin{proof}
Let $\alpha,\beta,\gamma\in\mathbb{F}_{\ell^3}\setminus\mathbb{F}_{\ell}$ be the roots of the irreducible cubic factor of $\tilde{P}(X)$. By  Theorem \ref{roots} there exists a root, say $\gamma$, such that $\gamma=\tilde{q}/\gamma'$ where $\gamma'$ is a root of the remaining factors. If $\gamma'$ were a root of a quadratic or linear polynomial, it would imply that $\gamma$ also belongs to $\mathbb{F}_{\ell^2}$, which is a contradiction. Therefore, $\gamma'\in\mathbb{F}_{\ell^3}\setminus\mathbb{F}_{\ell}$ and the remaining factor must be an irreducible cubic polynomial.  
\end{proof}

\begin{lemma}
Suppose that $\tilde{P}(X)\in\mathbb{F}_{\ell}[X] $ has linear irreducible factors of the form $(X-\alpha)^3(X-\beta)^2$, for some nonzero $\alpha,\beta\in\mathbb{F}_{\ell}$. Then its factorization pattern is necessarily  $[1]^4[1]^2$ or $[1]^3[1]^3$.
\end{lemma}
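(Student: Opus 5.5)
The plan is to exploit the symmetry of the roots of $\widetilde{P}$ under $\lambda\mapsto \tilde q/\lambda$, as in the proof of the quintuple-root lemma. \textbf{First} I would pin down the sixth root. Since $\widetilde{P}$ has degree $6$ and $(X-\alpha)^3(X-\beta)^2$ divides it, we can write $\widetilde{P}(X)=(X-\alpha)^3(X-\beta)^2(X-\gamma)$. Comparing constant terms gives $\gamma\in\mathbb{F}_\ell$, and $\gamma\neq 0$ because the constant term is $\tilde q^{3}\neq 0$ (recall $\ell\neq p$). I read the hypothesis as $\alpha\neq\beta$. Then the two patterns in the statement correspond exactly to $\gamma=\alpha$ (giving $[1]^4[1]^2$) and $\gamma=\beta$ (giving $[1]^3[1]^3$). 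So it suffices to rule out $\gamma\notin\{\alpha,\beta\}$.

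\textbf{Second}, I would turn Theorem \ref{roots}(ii) into a statement about multisets. The roots $\alpha_1,\dots,\alpha_6$ of $P$ split into the three pairs $\{\alpha_i,\alpha_{i+3}\}$ with $\alpha_i\alpha_{i+3}=q$. Hence the multiset $\{\alpha_1,\dots,\alpha_6\}$ equals the multiset $\{q/\alpha_1,\dots,q/\alpha_6\}$. Reducing modulo a prime above $\ell$ (possible since $q$ is a unit there), the multiset of roots of $\widetilde{P}$ in $\overline{\mathbb{F}}_\ell$ is invariant under the involution $\lambda\mapsto\tilde q/\lambda$. In particular, a root $\lambda$ and its partner $\tilde q/\lambda$ have the same multiplicity. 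This reduction step is the only slightly delicate point, and it is routine.

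\textbf{Third}, I would argue by contradiction, assuming $\alpha,\beta,\gamma$ are pairwise distinct, so their multiplicities are $3,2,1$ respectively. The root $\tilde q/\alpha$ must have multiplicity $3$, and $\alpha$ is the only such root, so $\alpha^2=\tilde q$. In the same way $\tilde q/\beta$ has multiplicity $2$, forcing $\beta^2=\tilde q$, and $\tilde q/\gamma$ has multiplicity $1$, forcing $\gamma^2=\tilde q$. But $X^2-\tilde q$ has at most two roots in the field $\mathbb{F}_\ell$, which contradicts the three distinct values $\alpha,\beta,\gamma$. Therefore $\gamma\in\{\alpha,\beta\}$, which yields exactly the two stated patterns.

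I do not expect any real obstacle. The only care needed is justifying the multiplicity-preserving involution from the pairing in Theorem \ref{roots}(ii), rather than merely knowing that $\tilde q/\lambda$ is again a root.
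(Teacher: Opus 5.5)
Your proof is correct and rests on the same idea as the paper's: the symmetry $\lambda\mapsto\tilde q/\lambda$ of the roots of $\widetilde{P}$, applied to the leftover root $\gamma$. The paper's one-line proof only asserts that $\tilde q/\gamma\in\{\alpha,\beta\}$, whereas you rightly make explicit the multiplicity-preserving form of this symmetry (which also follows directly from $X^6\widetilde{P}(\tilde q/X)=\tilde q^{3}\widetilde{P}(X)$) and then finish by showing that $\alpha,\beta,\gamma$ would have to be three distinct roots of $X^2-\tilde q$ in $\mathbb{F}_\ell$, which is impossible.
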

\begin{proof}
It is enough to observe that if $(X-\gamma)$ is the remaining linear factor, then $q/\gamma$ is equal to $\alpha$ or $\beta$. 
\end{proof}

The Jordan form of the Frobenius endomorphism on $J[\ell]$ is strongly constrained by the arithmetic and symplectic structure of the Jacobian. In particular, when $\pi$ has a unique eigenvalue in $\mathbb{F}_\ell$, not every non-semisimple Jordan configuration can occur. The following lemma excludes several highly non-diagonalizable cases in genus $3$.

\begin{lemma}
Let $C$ be a hyperelliptic curve of genus $3$ defined over a finite field $\mathbb{F}_q$, and let $\pi$ be the Frobenius endomorphism acting on $J[\ell]$, where $\ell \nmid q$. Suppose that $\pi$ has a unique eigenvalue $A\in \mathbb{F}_\ell$.
Then the Jordan
form of $\pi$ cannot be  $[1]^5[1],$   $[1]^3[1]^2[1]$  or  $[1]^3[1][1][1]$ .
\end{lemma}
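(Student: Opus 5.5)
The plan is to use the symplectic structure coming from the Weil pairing. I read the three patterns in the statement as Jordan types for the single eigenvalue $A$: blocks of sizes $(5,1)$, $(3,2,1)$ and $(3,1,1,1)$. I will show the following. After rescaling, $\pi$ becomes a unipotent element of $\mathrm{Sp}_6(\mathbb{F}_\ell)$. In such an element, Jordan blocks of odd size occur with even multiplicity. Each of the three excluded partitions has an odd part occurring exactly once: $5$ and $1$ in the first, $3$ and $1$ in the second, $3$ in the third. So none of them can occur.

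\emph{Step 1 (rescaling).} The Weil pairing $e_\ell$ is a nondegenerate alternating form $\langle\cdot,\cdot\rangle$ on $J[\ell]\simeq\mathbb{F}_\ell^6$. It satisfies $e_\ell(\pi x,\pi y)=e_\ell(x,y)^q$, i.e.\ $\langle Tx,Ty\rangle=\tilde q\,\langle x,y\rangle$. Since $A$ is the only eigenvalue, Theorem~\ref{roots} (as in the proof of the quintuple-root lemma) gives $A^2\equiv q \pmod{\ell}$, and $A\neq 0$. Put $S=A^{-1}T$. Then $\langle Sx,Sy\rangle=\tilde q A^{-2}\langle x,y\rangle=\langle x,y\rangle$, so $S$ is symplectic. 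It is also unipotent and has the same Jordan block sizes as $T$.

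\emph{Step 2 (passing to the Lie algebra).} Here I assume $\ell$ odd. This is the case implicitly needed, since the block-parity phenomenon differs in characteristic $2$, and I would check $\ell=2$ separately if the paper intends it. Because $S$ is unipotent and $\ell\neq 2$, the map $S+I$ has sole eigenvalue $2\neq 0$ and is invertible. Define the Cayley transform $X=(S-I)(S+I)^{-1}$. It is nilpotent and a rational function of $S-I$ with nonzero linear term, so it has the same Jordan type as $S-I$. A direct computation from $S^{*}=S^{-1}$ (adjoint with respect to $\langle\cdot,\cdot\rangle$) gives $X^{*}=-X$, i.e.\ $\langle Xx,y\rangle=-\langle x,Xy\rangle$.

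\emph{Step 3 (parity of blocks).} This is the main step. For each $k\ge1$ let
\[
W_k=\ker X^k\big/\bigl(\ker X^{k-1}+X\ker X^{k+1}\bigr),
\]
whose dimension equals the number of Jordan blocks of size $k$. On $W_k$ define $b_k(x,y)=\langle x,X^{k-1}y\rangle$. I would check two things using $X^{*}=-X$ and nondegeneracy of $\langle\cdot,\cdot\rangle$. First, $b_k$ is well defined and nondegenerate on $W_k$; this is the standard argument, and the routine but fiddly part. Second, $b_k(y,x)=(-1)^{k}b_k(x,y)$, because moving $X^{k-1}$ across contributes $(-1)^{k-1}$ and swapping the arguments of $\langle\cdot,\cdot\rangle$ contributes $-1$; also $b_k(x,x)=0$ for $k$ odd. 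Hence for odd $k$, $W_k$ carries a nondegenerate alternating form, so $\dim W_k$ is even.

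Applying this with $k=5$ and $k=1$ for type $(5,1)$, with $k=3$ for type $(3,2,1)$, and with $k=3$ for type $(3,1,1,1)$ gives a contradiction in each case, which proves the lemma. The obstacle I expect is verifying well-definedness and nondegeneracy of $b_k$ on the subquotient $W_k$. If that gets messy, I would instead argue block by block with an adapted basis of Jordan chains, showing that a chain of odd length must pair nondegenerately with a different chain of the same length.
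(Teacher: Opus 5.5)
For odd $\ell$ your argument is correct, and it takes a genuinely different route from the paper. The paper fixes an explicit Jordan basis. Using $e_\ell(\pi R,\pi S)=e_\ell(R,S)^q$ and $q\equiv A^2$, it derives the pairings one at a time until the first vector $D_1$ of the length-$5$ (resp.\ length-$3$) chain pairs trivially with every basis vector, which contradicts nondegeneracy. You instead rescale to $\mathrm{Sp}_6$, pass to $\mathfrak{sp}_6$ by the Cayley transform, and prove the parity theorem via the forms $b_k$ on $W_k$. Your three steps check out:
\begin{enumerate}[(a)]
\item $X$ is $S-I$ times a unit commuting with it, so $\ker X^j=\ker(S-I)^j$ for all $j$.
\item $X^*=-X$ follows from $S^*=S^{-1}$.
\item Nondegeneracy of $b_k$ follows from $(\ker X^k)^\perp=\operatorname{Im}X^k$.
\end{enumerate}
Your route is conceptual and shows the list is complete: the three excluded patterns are exactly the partitions of $6$ with an odd part of odd multiplicity. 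The paper's computation is ad hoc but elementary, and it uses nothing about $\ell$ beyond $A\in\mathbb{F}_\ell^\times$.

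\textbf{The gap is $\ell=2$.} The paper assumes $q$ odd, so $\ell=2$ satisfies $\ell\nmid q$ and lies within the lemma; the paper also treats $\ell=2$ explicitly in Appendix B. There your Step 2 collapses, because $S+I=S-I$ is nilpotent, and you leave the case open. Your stated reason is also mistaken. What you need is a statement about unipotent elements of the \emph{group} $\mathrm{Sp}_6(\mathbb{F}_2)$, not of the Lie algebra, and for those odd-size blocks still occur with even multiplicity. Indeed, the paper's computation works verbatim at $\ell=2$.

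The simplest repair is to drop the Cayley transform and run Step 3 with $N=S-I$ directly.
\begin{enumerate}[(a)]
\item Then $N^*=-S^{-1}N$. Since $S^{-(k-1)}N^{k-1}y=N^{k-1}y$ for $y\in\ker N^k$, you get $\langle N^{k-1}x,y\rangle=(-1)^{k-1}\langle x,N^{k-1}y\rangle$ on $\ker N^k$.
\item This gives the same symmetry $b_k(y,x)=(-1)^k b_k(x,y)$, the same well-definedness, and the same nondegeneracy, now using $(\ker N^k)^\perp=\operatorname{Im}(S^{-k}N^k)=\operatorname{Im}N^k$.
\item For alternation when $k=2m+1$, use the self-adjoint operator $\tau=S+S^{-1}-2I=S^{-1}N^2$. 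On $\ker N^k$ one has $N^{2m}x=\tau^m x$, so $b_k(x,x)=\langle x,\tau^m x\rangle$.
\item If $m=2r$, this equals $\langle \tau^r x,\tau^r x\rangle=0$. If $m=2r+1$, put $y=\tau^r x$; it equals $\langle y,\tau y\rangle=\langle y,Sy\rangle+\langle Sy,y\rangle=0$.
\end{enumerate}
This makes your argument characteristic-free and removes the need for the Cayley transform. Alternatively, handle $\ell=2$ by the paper's direct computation.
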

\begin{proof}
Suppose that the Jordan form of $\pi$ is  $[1]^5[1]$. Let $V\subset\mathbb{F}_{\ell}^6$ be a
$\pi$-stable subspace of dimension 5. If $A$ is the only eigenvalue of $\pi$ in $\mathbb{F}_\ell$, then there exists a basis  $\{D_1,D_2,D_3,D_4,D_5\}$ of $V$ such that the action of $\pi$ on a basis $\{D_1,D_2,D_3,D_4,D_5,D_6\}$ of $\mathbb{F}_{\ell}^6$ is given by the following matrix:
$$ \begin{bmatrix}
A & 1 & 0 & 0 & 0 & 0 \\
0 & A & 1 & 0 & 0 & 0 \\
0 & 0 & A & 1 & 0 & 0 \\
0 & 0 & 0 & A & 1 & 0 \\
0 & 0 & 0 & 0 & A & 0\\
0 & 0 & 0 & 0 & 0 & A
\end{bmatrix}.$$ 
Following the idea in \cite{GS05}, we can use the Weil pairing on $J[C]$. For any \verde{points} $R,S\in J[\ell]$, the Weil pairing satisfies $e_{\ell}(R,S)^q=e_{\ell}(\pi(R),\pi(S))$. Considering that  $\pi(D_1) = A D_1, \pi(D_i) = A D_i + D_{i-1}$ for $i \in \{2, 3, 4, 5\}$ and $\pi(D_6) = A D_6$, then $e_{\ell}(D_1,D_i)=1$ for all $i \in \{2,3,4,5\}$. 
In fact, we have that
\[
e_\ell(D_1,D_i)^q
= e_\ell(AD_1,AD_i+D_{i-1})
= e_\ell(D_1,D_i)^{A^2} e_\ell(D_1,D_{i-1})^A.
\]
We know that $q \equiv A^2 \pmod{\ell}$ and $e_{\ell}(\cdot,\cdot)$ takes values in the $\ell$-th roots of unity in $\overline{\mathbb{F}}_{q}$, so $$1 = e_{\ell}(D_1, D_{i-1})^A.$$ Since $A\in \mathbb{F}_{\ell}^{\times}$, this implies $e_\ell(D_1,D_{i-1})=1.$ Applying the same argument to $e_{\ell}(D_2,D_4)$ and $e_{\ell}(D_3,D_4)$, we successively obtain $e_\ell(D_2,D_3)=e_\ell(D_2,D_4)=1.$ Finally, substituting these relations into the identity for $e_{\ell}(D_2,D_5)$, we deduce $e_\ell(D_1,D_5)=1.$\\
Finally, from
\begin{eqnarray*}
    e_{\ell}(D_2,D_6)^q&=&e_{\ell}(AD_2+D_1,AD_6)\\
    &=&e_{\ell}(D_2,D_6)^{A^2}e_{\ell}(D_1,D_6)^{A}
\end{eqnarray*}
we obtain $e_\ell(D_1,D_6)=1$,
which contradicts the non-degeneracy of the Weil pairing.

For the remaining patterns, the same principle applies. In fact, in both cases there is a Jordan block of dimension 3. Let $\{D_1,D_2,D_3\}$ be a basis of the invariant subspace such that the action of $\pi$ is the following: $\pi(D_1)=AD_1, \pi(D_2)=AD_2+D_1$ and $\pi(D_3)=AD_3+D_2$. As in the previous cases, it is possible to prove that $e_{\ell}(D_1,D_2)=e_{\ell}(D_1,D_3)=1$. If the Jordan form is $[1]^3[1][1][1]$, then there is a basis $\{D_1,D_2,D_3,F_4,F_5,F_6\}$
such that $\pi(F_i)=AF_i$, for $i=4,5,6$. It follows that $e_{\ell}(D_1,F_i)=1$. 
Finally, if we are in the case $[1]^3[1]^2[1]$, the block of dimension 2 has a basis $\{E_1,E_2\}$ such that $\pi(E_1)=AE_1,\pi(E_2)=AE_2+E_1$ and  $e_{\ell}(D_1,E_1)=e_{\ell}(D_1,E_2)=1$.

\end{proof}

\begin{corollary} The different factorization types  of the characteristic polynomial are listed in Table \ref{tablatipos}.
     \begin{table}[]
         \centering
\begin{tabular}{|c|l|}
\hline
\textbf{Type} & \textbf{Factorization of the characteristic polynomial} \\ \hline
I   & $[6]$ \\ \hline
II  & $[3][3]$ \\ \hline
III & $[4][2]$ \\ \hline
IV  & $[4][1][1],\; [4][1]^2$ \\ \hline
V   & $[2][2][2],\; [2]^2[2],\; [2]^3$ \\ \hline
VI  & $[2][2][1][1],\; [2][2][1]^2,\; [2]^2[1][1],\; [2]^2[1]^2$ \\ \hline
VII & $[2][1]^4,\; [2][1]^2[1]^2,\; [2][1]^2[1][1],\; [2][1][1][1][1]$ \\ \hline
VIII& $[1]^2[1]^4,\; [1]^2[1]^2[1]^2,\; [1]^2[1]^2[1][1],\; [1]^2[1][1][1][1],\; [1]^3[1]^3,$\\ & $[1][1][1][1][1][1]$,  $[1]^6$ \\ \hline
\end{tabular}
\caption{Types of factorization of characteristic polynomial \azul{of the Frobenius endomorphism for genus 3 curves.}}
         \label{tablatipos}
     \end{table}
\end{corollary}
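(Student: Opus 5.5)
The corollary follows by enumerating all ways a degree-6 polynomial over $\mathbb{F}_\ell$ can split into irreducible factors, and then discarding the patterns ruled out by the previous lemmas together with the Weil symmetry $\alpha\mapsto \tilde q/\alpha$ of Theorem \ref{roots}. Concretely, I will list the partitions of $6$ that record the degrees of the irreducible factors: $6$, $5+1$, $4+2$, $4+1+1$, $3+3$, $3+2+1$, $3+1+1+1$, $2+2+2$, $2+2+1+1$, $2+1+1+1+1$, $1^6$. For each partition I then record the possible coincidences among factors of equal degree. This is the meaning of the notation $[n]^c$: a factor of degree $n$ appearing with multiplicity $c$, in the sense of the Jordan/elementary-divisor data of $T$.

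First I prune by degree. The set of roots of $\widetilde P$ is stable under $\alpha\mapsto\tilde q/\alpha$. This map sends the roots of an irreducible factor of degree $n$ onto the roots of an irreducible factor of the same degree $n$, since $\mathbb{F}_\ell(\alpha)=\mathbb{F}_\ell(\tilde q/\alpha)$. Hence a factor of odd degree $n$ that is not self-paired must be matched with another factor of degree $n$. A self-paired irreducible factor of odd degree $n>1$ is impossible: the involution $\alpha\mapsto\tilde q/\alpha$ on its $n$ roots would need a fixed point $\alpha^2=\tilde q$, forcing $\alpha\in\mathbb{F}_{\ell^2}\cap\mathbb{F}_{\ell^n}=\mathbb{F}_\ell$. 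This kills $5+1$, which has a lone quintic. By the lemma on cubic factors, it also kills $3+2+1$ and $3+1+1+1$ and leaves only $[3][3]$. The multiplicity pattern $[3]^2$ should be absorbed into Type II; I will check whether it belongs there or is excluded. The surviving degree partitions then give the types I–VIII in the order of the table.

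Next I handle the multiplicities within each surviving partition. For the quadratic and linear factors I list the coincidence patterns, e.g.\ $[2][2][2]$, $[2]^2[2]$, $[2]^3$, and for linear factors the partitions of the root multiset. I remove the patterns excluded by the earlier lemmas. The quintuple-root lemma removes $[1]^5[1]$. The lemma on $(X-\alpha)^3(X-\beta)^2$ forces $[1]^4[1]^2$ or $[1]^3[1]^3$ and excludes $[1]^3[1]^2[1]$. Symmetry excludes a single root of multiplicity $3$ with three distinct others: the pairing forces the multiplicity of $\alpha$ to equal that of $\tilde q/\alpha$, so a root of multiplicity $3$ not paired to itself needs a partner of multiplicity $3$. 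The Weil-pairing lemma additionally excludes the non-diagonalizable configurations with a unique eigenvalue. The same multiplicity-matching argument, applied to mixed cases such as $[2][1]^\ast$ and $[4][1]^\ast$, produces the entries of Types IV, VI and VII.

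The main obstacle is bookkeeping rather than mathematics. I must make sure that the listed patterns are exactly those consistent with the involution $\alpha\mapsto\tilde q/\alpha$. Linear roots can be self-paired ($\alpha^2=\tilde q$) or come in pairs, and I would handle this by splitting the linear roots into self-paired and non-self-paired classes and counting multiplicities in each. I also have to decide carefully which patterns are genuinely different under the paper's notation, since $[1]^2[1]^2[1][1]$ versus $[1]^2[1][1][1][1]$ depends on interpreting exponents as Jordan data. Table \ref{tablatipos} is then read off as the remaining list.
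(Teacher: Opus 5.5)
\textbf{Gap 1: the fixed points of the involution.} Your plan is to list the eleven partitions of $6$ and prune them with the symmetry $\alpha\mapsto\tilde q/\alpha$ and the preceding lemmas. That is the paper's route; it gives no argument beyond those lemmas. Your degree-level pruning is fine, and your own fixed-point argument already settles $[3]^2$: a squared cubic would be self-paired, hence is excluded.

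The gap is in the multiplicities of the linear factors. Matching multiplicities under $\alpha\mapsto\tilde q/\alpha$ says nothing at the fixed points $\pm s$, where $s^2=\tilde q$. So your claim that symmetry excludes a triple root together with three simple roots is false as stated. For instance, $(X-s)^3(X+s)(X-b)(X-\tilde q/b)$ is stable under the involution with matching multiplicities. The same is true of $(X-s)^5(X+s)$, and of $(X-s)^3(X+s)\,p(X)$ with $p$ an irreducible quadratic of norm $\tilde q$. The latter is the pattern $[2][1]^3[1]$, which your matching argument for mixed cases would wrongly add to Type VII.

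The missing ingredient is the constant term. $\tilde P(0)=\tilde q^{3}$ is the product of the roots, and each pair $\{\alpha,\tilde q/\alpha\}$ with $\alpha\neq\tilde q/\alpha$ contributes $\tilde q$. Hence $\tilde q^3=(-1)^{m_{-s}}\tilde q^3$, so the multiplicities of $s$ and of $-s$ are both even. This is the usual parity condition for a symplectic similitude. It kills all three polynomials above. It also repairs the quintuple-root lemma, whose proof asserts $AB\equiv\tilde q$ without ruling out $B=-A$. With this fact added, your split of linear roots into self-paired and paired classes becomes a complete bookkeeping.

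\textbf{Gap 2: the survivors do not match the table.} Carried out correctly, that bookkeeping does not end at Table~\ref{tablatipos}. For $\ell\ge 5$, take $[1]^4[1][1]$, i.e.\ $\tilde P=(X-s)^4(X-b)(X-\tilde q/b)$ with $s^2=\tilde q$ and $b\neq\pm s$. It passes every test:
\begin{itemize}
\item it is stable under the involution;
\item its constant term is $s^4\cdot\tilde q=\tilde q^3$;
\item $s$ and $-s$ have multiplicities $4$ and $0$, both even.
\end{itemize}
It is also the characteristic polynomial of a similitude with multiplier $\tilde q$. In a symplectic basis $u_1,u_2,u_3,v_1,v_2,v_3$, send $u_i\mapsto su_i$ and $v_i\mapsto sv_i$ for $i=1,2$, and send $u_3\mapsto bu_3$, $v_3\mapsto(\tilde q/b)v_3$. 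So no Weil-pairing argument can exclude it, and none of the paper's lemmas does.

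Yet Type VIII lists only the multiplicity partitions $4+2$, $2+2+2$, $2+2+1+1$, $2+1+1+1+1$, $3+3$, $1^6$ and $6$, and omits $4+1+1$. This is odd, since the analogous patterns $[4][1][1]$ and $[2][1]^2[1][1]$ do appear in Types IV and VII. Your closing step, ``the table is read off as the remaining list'', therefore fails. A correct proof along your lines yields Table~\ref{tablatipos} together with $[1]^4[1][1]$, and you should state that rather than assert the printed table.

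Incidentally, the exponents are not ambiguous. The table records multiplicities of the factors of $\tilde P$, and the Jordan data are listed separately in Appendix A.
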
 

\subsection{Factorization pattern}

Having classified the possible matrices, we can now show how to obtain factorization patterns from them.
 Following the approach of Gaudry and Schost \cite{GS05}, we provide the details for the case with  $P(x)=p(x)^3$ where $p(x)$ is  an irreducible polynomial of degree 2. The remaining cases follow by analogy. 
\\ 
We will use the following lemma
\begin{lemma}{ \cite[Lemma 1]{riquelme2016}.} \label{lemma:orbit} Let $D$ be a point in $J[\ell]$ and let $$V_D:=Span_{\F_\ell}\{\pi^n(D), n \in \mathbb{N} \}.$$ Let $P$ be the minimal polynomial of $\pi$ restricted to $V_D$. Then the degree of extension of $\F_q$ where $D$ is defined is $$o(P):=min\{k \in \mathbb{N}^*:x^k-1=0 \ \pmod{P} \}.$$\end{lemma}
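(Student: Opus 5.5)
We have to prove Lemma~\ref{lemma:orbit}: if $V_D$ is the $\pi$-cyclic subspace generated by $D$ and $P$ is the minimal polynomial of $\pi|_{V_D}$, then the degree of the field of definition of $D$ over $\F_q$ equals $o(P)$.

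The argument rests on two standard facts. A point $D\in J(\overline{\F}_q)$ is defined over $\F_{q^k}$ if and only if it is fixed by the $q^k$-power Frobenius. On $J$, that Frobenius acts as the endomorphism $\pi^k$. Hence
\[
D\in J(\F_{q^k}) \iff \pi^k(D)=D \iff (\pi^k-1)(D)=0 .
\]
The degree of the field of definition of $D$ is therefore the least $k\ge 1$ with $(\pi^k-1)(D)=0$.

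The plan is to show that, for every $k\ge1$, $(\pi^k-1)(D)=0$ holds if and only if $P \mid X^k-1$ in $\F_\ell[X]$. Taking the least such $k$ then gives exactly $o(P)$.

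First, we work inside the $\F_\ell$-vector space $J[\ell]$, on which $\pi$ acts linearly, and $D\in J[\ell]$. The subspace $V_D$ is finite-dimensional, $\pi$-stable, and cyclic with generator $D$.

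Second, let $I_D=\{Q\in\F_\ell[X] : Q(\pi)(D)=0\}$, the annihilator ideal of $D$, with monic generator $m_D$. We claim $m_D=P$.
\begin{itemize}
\item If $Q(\pi)(D)=0$, then for every $n$ we have $Q(\pi)(\pi^n D)=\pi^n Q(\pi)(D)=0$, because polynomials in $\pi$ commute. So $Q(\pi)$ vanishes on all of $V_D$, and therefore $P\mid Q$.
\item Conversely, $P(\pi)$ kills $V_D$, and in particular kills $D$, so $P\in I_D$.
\end{itemize}
Hence $I_D=(P)$.

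Third, apply this with $Q=X^k-1$. We get $\pi^k(D)=D$ if and only if $X^k\equiv 1 \pmod P$. The least such $k$ is $o(P)$ as defined in the statement.

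It remains to see that $o(P)$ is finite. Since $\pi$ is invertible on $J[\ell]$ (its determinant is $\widetilde{q}^{\,3}\neq0$ because $\ell\nmid q$), $P(0)\ne0$. So $X$ is a unit in $\F_\ell[X]/(P)$, a finite ring, and therefore some power $X^k$ equals $1$. Alternatively, $D$ has only finitely many coordinates, so it is defined over some finite extension.

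The only delicate point is the first fact: the $q^k$-power Frobenius of the Galois group acts on $J(\overline{\F}_q)$ as the endomorphism $\pi^k$. This holds because $J$ is defined over $\F_q$, so the geometric Frobenius endomorphism coincides on points with the arithmetic Galois action. With that in hand, the rest is linear algebra.
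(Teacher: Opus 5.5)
Your proof is correct and complete. The paper gives no proof of this lemma; it imports it from \cite{riquelme2016}. Your argument is the standard one: $D\in J(\F_{q^k})$ if and only if $\pi^k(D)=D$, and the annihilator ideal of $D$ in $\F_\ell[X]$ is generated by the minimal polynomial $P$ of $\pi$ on the cyclic subspace $V_D$, so $\pi^k(D)=D$ exactly when $P\mid X^k-1$. The only implicit point is that $D\in V_D$ even if $\mathbb{N}$ excludes $0$, which follows because $\pi$ has finite order on $J[\ell]$.
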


\verde{ We first provide a counting lemma valid for a general companion block, from which the individual cases can be deduced by combining such blocks.}

\begin{lemma}\label{lemma:orbitlayers}
\azul{Let $A_d$ be the block associated with an irreducible polynomial
$p(x)$ of degree $d$, and let $n$ be its length.
For each $1\le k\le n$, the number of elements annihilated by
$p(x)^k$ but not by $p(x)^{k-1}$ is
\[
\ell^{d(k-1)}(\ell^d-1).
\]}
\end{lemma}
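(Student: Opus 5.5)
We interpret the block $A_d$ as the companion matrix of $p(x)^n$, acting on the cyclic module $M=\mathbb{F}_\ell[x]/(p(x)^n)$ with $\pi$ acting as multiplication by $x$. This is the elementary-divisor form from the rational canonical form theorem used above. The plan is to compute, for each $0\le k\le n$, the size of the kernel $M_k:=\ker p(A_d)^k$, and then take differences.

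\textbf{Step 1: the kernels.} We claim that $M_k=p(x)^{n-k}M$. The inclusion $\supseteq$ is immediate, since $p^k\cdot p^{n-k}g=p^n g=0$ in $M$. For $\subseteq$, suppose $p^k g\equiv 0 \pmod{p^n}$. By unique factorization in $\mathbb{F}_\ell[x]$ and irreducibility of $p$, the polynomial $p^{n-k}$ divides $g$.

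\textbf{Step 2: dimensions.} Multiplication by $p^{n-k}$ induces an isomorphism
\[
\mathbb{F}_\ell[x]/(p^k)\;\xrightarrow{\ \sim\ }\; p^{n-k}M .
\]
It is surjective by construction. It is injective because $p^{n-k}g\equiv 0 \pmod{p^n}$ forces $p^k\mid g$. Hence $\dim_{\mathbb{F}_\ell}M_k=dk$ and $|M_k|=\ell^{dk}$ for $0\le k\le n$; in particular $M_0=\{0\}$.

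\textbf{Step 3: counting.} The elements annihilated by $p^k$ but not by $p^{k-1}$ are exactly those of $M_k\setminus M_{k-1}$. Since $M_{k-1}\subseteq M_k$, there are
\[
\ell^{dk}-\ell^{d(k-1)}=\ell^{d(k-1)}(\ell^d-1)
\]
of them, which is the claimed formula.

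The only step needing care is Step 1, the identification of the kernel. It rests on $p$ being irreducible, so that divisibility by powers of $p$ behaves like a valuation, and on the block being a single cyclic block for $p^n$. If $A_d$ were instead read as a Jordan-type block, i.e. block upper-triangular with diagonal blocks the companion matrix $C_p$ and identity matrices on the superdiagonal, we would first observe that this matrix is also cyclic with minimal polynomial $p^n$. It is therefore similar to the companion matrix of $p^n$, and the count is unchanged.
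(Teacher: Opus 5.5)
Your proof is correct and follows the same route as the paper: $\ker p(A_d)^{k-1}\subseteq\ker p(A_d)^k$ are subspaces of dimensions $d(k-1)$ and $dk$, and subtracting cardinalities gives $\ell^{dk}-\ell^{d(k-1)}=\ell^{d(k-1)}(\ell^d-1)$. The one difference is that the paper simply asserts the dimension $dk$, whereas you justify it through $\ker p^k=p^{n-k}M\cong\mathbb{F}_\ell[x]/(p^k)$; that step needs only cancellation in the domain $\mathbb{F}_\ell[x]$, not the irreducibility of $p$.
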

\begin{proof}
The subspace annihilated by $p(x)^k$ has dimension $kd$ and hence
contains $\ell^{kd}-1$ nonzero elements.
Similarly, the subspace annihilated by $p(x)^{k-1}$ has dimension
$(k-1)d$ and contains $\ell^{(k-1)d}-1$ nonzero elements.
Therefore the number of elements annihilated by $p(x)^k$ but not by
$p(x)^{k-1}$ is
\[
(\ell^{kd}-1)-(\ell^{(k-1)d}-1)
=
\ell^{(k-1)d}(\ell^d-1).
\]
\end{proof}

\begin{defi}
    The companion matrix of a monic polynomial $p(x)=a_0+a_1x+\ldots+a_{n-1}x^{n-1}+x^n$ is given by
$$C[p]=\begin{bmatrix}
0 & 1 & 0 & \cdots & 0 & 0 \\
0 & 0 & 1 & \cdots & 0 & 0 \\
\vdots & \vdots & \ddots & \ddots & \vdots & \vdots \\
0 & 0 & \cdots & 0 & 1 & 0 \\
0 & 0 & \cdots & 0 & 0 & 1 \\
-a_0 & -a_1 & \cdots & -a_{n-3} & -a_{n-2} & -a_{n-1}
\end{bmatrix}.$$
\end{defi}

Having established the degree of the field of definition for a divisor (Lemma \ref{lemma:orbit}) and the count of elements at a specific level within a single companion block (Lemma \ref{lemma:orbitlayers}), we now extend this analysis to the entire vector space. Since the Frobenius matrix $M$ decomposes as a direct sum of companion blocks, the choices of components in each block are independent. 
The following proposition provides the systematic method to compute the exact number of $\ell$-torsion elements defined over a specific extension by combining these independent blocks.

\begin{proposition}\label{prop:galois_orbits}
 Let $p_1(x),\dots,p_n(x)$ be distinct irreducible polynomials of degrees 
$d_1,\dots,d_n$, respectively, and consider the  matrix $$M=\operatorname{diag}\!\left(
C[p_1^{e_{1,1}}],\dots,C[p_1^{e_{1,m_1}}],
\dots,
C[p_n^{e_{n,1}}],\dots,C[p_n^{e_{n,m_n}}]
\right).$$
For elements with nonzero components in blocks 
$C[p_{i_1}^{e_{i_1,j_1}}],\dots,C[p_{i_r}^{e_{i_r,j_r}}]$ 
at levels $k_1,\dots,k_r$, (i.e., with minimal 
polynomials exactly $p_{i_s}^{k_s}(x)$ respectively), the following statements hold:
\begin{itemize}
    \item [(i)] Their minimal polynomial is $\operatorname{lcm}\!\left(p_{i_1}^{k_1}(x),\dots,p_{i_r}^{k_r}(x)\right),$

\item [(ii)] They are defined over an extension of degree
$o\!\left(\operatorname{lcm}\!\left(p_{i_1}^{k_1}(x),
\dots,p_{i_r}^{k_r}(x)\right)\right)$. 

\item[(iii)] The total number of such elements is $\prod_{s=1}^{r}\ell^{d_{i_s}(k_s-1)}(\ell^{d_{i_s}}-1)$ .
\end{itemize}

\end{proposition}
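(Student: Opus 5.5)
The plan is to decompose $\F_\ell^6$ as the direct sum $V=\bigoplus_{i,j} W_{i,j}$, where $W_{i,j}$ is the $M$-invariant subspace on which $M$ acts by the companion block $C[p_i^{e_{i,j}}]$. An element $D$ is then written uniquely as $D=\sum_{i,j} D_{i,j}$ with $D_{i,j}\in W_{i,j}$. The hypothesis says that exactly the components $D_{i_s,j_s}$, $s=1,\dots,r$, are nonzero, and that the minimal polynomial of $M$ on the cyclic subspace $V_{D_{i_s,j_s}}$ is exactly $p_{i_s}^{k_s}$. Each $W_{i,j}$ is a cyclic $\F_\ell[x]$-module isomorphic to $\F_\ell[x]/(p_i^{e_{i,j}})$, so its submodules form the chain $\ker p_i(M)^k|_{W_{i,j}}$. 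This is exactly what Lemma \ref{lemma:orbitlayers} counts, with $d=d_i$ and $n=e_{i,j}$.

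For (i), I will show that a polynomial $g$ annihilates $D$ iff it annihilates every component $D_{i_s,j_s}$. Since each $W_{i,j}$ is $M$-stable, $g(M)D=\sum g(M)D_{i,j}$ with $g(M)D_{i,j}\in W_{i,j}$, and the sum is direct. Hence the annihilator ideal of $D$ (equivalently, the minimal polynomial of $M$ on $V_D$) is the intersection of the ideals $(p_{i_s}^{k_s})$, that is, the ideal generated by $\operatorname{lcm}(p_{i_1}^{k_1},\dots,p_{i_r}^{k_r})$. When several blocks share the same $p_i$, this lcm collapses to the largest power, and the argument handles that automatically.

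Part (ii) then follows immediately by applying Lemma \ref{lemma:orbit} to $D$, with the minimal polynomial from (i).

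For (iii), the choices of the components $D_{i_s,j_s}$ are independent because the sum is direct, and all other components are forced to be $0$. For each $s$, the number of admissible $D_{i_s,j_s}$ is the number of elements of $W_{i_s,j_s}$ annihilated by $p_{i_s}^{k_s}$ but not by $p_{i_s}^{k_s-1}$. By Lemma \ref{lemma:orbitlayers} this number is $\ell^{d_{i_s}(k_s-1)}(\ell^{d_{i_s}}-1)$. Multiplying over $s$ gives the stated product.

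The only delicate step is checking the layer count inside a single block. Lemma \ref{lemma:orbitlayers} uses that $\ker p(M)^k$ on $\F_\ell[x]/(p^n)$ has dimension exactly $kd$. I would verify this by noting that this kernel equals $p^{n-k}\F_\ell[x]/(p^n)\cong \F_\ell[x]/(p^k)$. I would also remark that "level $k_s$" must be read as $1\le k_s\le e_{i_s,j_s}$, so that the counts are nonzero and well defined.
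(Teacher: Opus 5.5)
Your proposal is correct and follows the same route as the paper: decompose into the invariant companion blocks, identify the minimal polynomial of $D$ as the lcm of the minimal polynomials of its components, apply Lemma~\ref{lemma:orbit} for (ii), and use Lemma~\ref{lemma:orbitlayers} together with independence of the block components for (iii). The differences are only in detail: you prove the lcm fact directly as an intersection of annihilator ideals (the paper cites Roman), and you check that the kernel of $p(M)^k$ inside each block has dimension $kd$, which makes the argument more self-contained.
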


\begin{proof}

 For each block $C[p_i^{e_{i,j}}]$, Lemma~\ref{lemma:orbitlayers} gives 
$\ell^{d_i(k-1)}(\ell^{d_i}-1)$ elements at level $k$. Since the matrix $M$ decomposes as a direct sum, a vector with nonzero component at level $k_s$ in block $C[p_{i_s}^{e_{i_s,j_s}}]$ for $s=1,\dots,r$ 
has minimal polynomial
\[
\operatorname{lcm}\left(p_{i_1}^{k_1}(x),\dots,p_{i_r}^{k_r}(x)\right).
\]
The choices in each block are independent, so the number of such elements is
\[
\prod_{s=1}^{r}\ell^{d_{i_s}(k_s-1)}(\ell^{d_{i_s}}-1),
\]
and the corresponding orbit degree is 
$o\!\left(\operatorname{lcm}\left(p_{i_1}^{k_1}(x),\dots,
p_{i_r}^{k_r}(x)\right)\right)$ by Lemma~\ref{lemma:orbit}.\end{proof}

\verde{The fact that the minimal polynomial of a vector in a direct sum of invariant subspaces is the least common multiple of the minimal polynomials of its components follows from the order properties of modules over a principal ideal domain (for more details, see \cite[Theorems 6.2 and  7.14]{roman2008}).}

The matrix representing the Frobenius endomorphism with characteristic polynomial $P(x)$ admits three possible Jordan forms:
$$(i) \begin{pmatrix}
A_2 & 0 & 0 \\
0   & A_2 & 0 \\
0   & 0   & A_2
\end{pmatrix}, \quad (ii)
\begin{pmatrix}
A_2 & I_2 & 0 \\
0   & A_2 & I_2 \\
0   & 0   & A_2
\end{pmatrix},\quad (iii)
\begin{pmatrix}
A_2 & 0 & 0 \\
0   & A_2 & I_2 \\
0   & 0   & A_2
\end{pmatrix}.$$
We now describe the factorization of the Galois orbits for case $(iii)$, as the remaining cases are similar.\\ 

We have that $M = \operatorname{diag}(C[p^2], C[p])$.
\begin{itemize}
    \item In the first level ($k=1$) of the block $C[p^2]$, the elements have minimal polynomial $p(x)$. By Lemma~\ref{lemma:orbitlayers}, there are $ \ell^{2(1-1)}(\ell^2-1)=\ell^2-1$
    such elements, all defined over an extension of degree $o(p(x))$.
    \item In the second level ($k=2$) of the block $C[p(x)^2]$, the elements have minimal polynomial $p(x)^2$. By Lemma~\ref{lemma:orbitlayers}, there are $    \ell^{2(2-1)}(\ell^2-1)=\ell^2(\ell^2-1)$
    such elements, all defined over an extension of degree $o(p(x)^2)$.
    \item In the block $C[p]$, the elements have minimal polynomial $p(x)$. By Lemma~\ref{lemma:orbitlayers}, there are
$\ell^{2(1-1)}(\ell^2-1)=\ell^2-1$
    such elements, all defined over an extension of degree $o(p(x))$.
    \item Consider elements with nonzero components in both the first level of $C[p^2]$ and the block $C[p]$. Since $    \operatorname{lcm}(p(x),p(x))=p(x)$,
    their minimal polynomial is $p(x)$. Hence there are
     $(\ell^2-1)^2$
    such elements, all defined over an extension of degree $o(p(x))$.
    \item Consider elements with nonzero components in both the second level of $C[p^2]$ and the block $C[p]$. Since $    \operatorname{lcm}(p(x)^2,p(x))=p(x)^2,$
    their minimal polynomial is $p(x)^2$. Hence there are $    \ell^2(\ell^2-1)^2$
    such elements, all defined over an extension of degree $o(p(x)^2)$.
\end{itemize}
The corresponding elements are grouped according to orbit size:
$(\ell^2-1)+(\ell^2-1)+(\ell^2-1)^2
=\ell^4-1$ of degree $o(p(x))$ and
$\ell^2(\ell^2-1)+\ell^2(\ell^2-1)^2
=
\ell^4(\ell^2-1)$ of degree $o(p(x)^2)$.  \\
Therefore the $\ell$-torsion Galois orbits  decomposition is  
\[
\bigl(o(p(x))\bigr)^{\frac{\ell^4-1}{o(p(x))}}
\bigl(o(p(x)^2)\bigr)^{\frac{\ell^4(\ell^2-1)}{o(p(x)^2)}}.
\]
\begin{example}\label{examplew}
Consider the field $\mathbb{F}_5$ and the curve 
\begin{align*}
\C : y^2&= x^7 + 4x^3 + 3x;
\end{align*} 
over $\mathbb{F}_5.$. \verde {The characteristic polynomial of Frobenius acting on $T_{\ell}(\pi)$} is $(x^2+2x+2)^3$. \\
Over $\mathbb{F}_3$, the polynomial
$p(x)=x^2+2x+2$ is irreducible.
The elements contained in the subspace corresponding to the first blocks in rational form are defined over an extension of degree $o(p(x)) = 8$, while the remaining elements, which are in the next layer, are defined over an extension of degree $o(p(x)^2) = 24$.
Then, its factorization of the 3-torsion Galois orbits
is of the form $(8)^{10}(24)^{27}$. 
There are $10=\dfrac{3^4-1}{8}$ Galois orbits of length $8$ and $27=\dfrac{3^4(3^2-1)}{24}$ Galois orbits of length
$24$. Therefore, the number of nonzero $3$-torsion points accounted for is
\[
10\cdot 8 + 27\cdot 24 = 80+648=728=3^6-1.
\]
This agrees with the fact that $J[3]$ is a $6$-dimensional
$\mathbb F_3$-vector space.
Consequently, the $3$-torsion has rank $4$ over $\mathbb F_{p^8}$ and full
rank $6$ over $\mathbb F_{p^{24}}$. Hence $J[3](\mathbb F_{p^8})\simeq (\mathbb Z/3\mathbb Z)^4$ and 
$J[3](\mathbb F_{p^{24}})\simeq (\mathbb Z/3\mathbb Z)^6.$
\end{example}

\section{Upper bounds for extension degrees in genus 3}

Following Matsuo’s approach, we analyze the upper bound for the extension degree of the field of definition of \(J[\ell]\) for each type.
Throughout this section we assume that $\ell\neq 2$, unless the case $\ell=2$ is treated separately. For each factorization type of the characteristic polynomial modulo $\ell$, we proceed as follows.
First, we determine a finite extension of $\mathbb{F}_\ell$ over which the matrix representing the Frobenius endomorphism becomes upper triangular, and identify the semisimple part of its action on  J[$\ell$].
We then bound the exponent needed to annihilate the nilpotent contribution, which depends on the maximal size of the Jordan blocks.
Combining these two ingredients yields an explicit upper bound for the degree of the field of definition of J[$\ell$].
 We denote by
\[
e = \#\langle \tilde{q} \rangle,\quad
e_n = \#\langle -\tilde{q} \rangle,
\]
the multiplicative orders of $\tilde q$ and $-\tilde q$ in
$\mathbb F_\ell^\times$.
If $\tilde q$ is a quadratic residue modulo $\ell$, we denote by $e_r=\#\langle\sqrt{\tilde q}\rangle$
the multiplicative order of a fixed square root
$\sqrt{\tilde q}\in\mathbb F_\ell^\times$.
By Fermat's little theorem, all these orders divide $\ell - 1$.
In order to obtain uniform bounds, we  assume
$e = e_n = e_r = \ell - 1$.

\begin{center}
\begin{table}
\begin{tabular}{llllll} 
\toprule
Type& Char. poly & Max. ext. deg  & Worst case &  &  \\ \midrule
\addlinespace[0.5em]
Type I&$[6]$ & $e(\ell^3+1)$ & $\ell^4 - \ell^3 + \ell - 1$ &  \\
\addlinespace[0.5em]
Type II&$[3][3]$ & $(\ell-1)(\ell^2+\ell+1) $ & $\ell^3-1$ &  \\
\addlinespace[0.5em]
Type III&$[4][2]$ &  $ \lcm(e(\ell^2+1),e(\ell+1)$)& $(\ell^4-1)/2$ \\
\addlinespace[0.5em]
Type IV&$[4][1][1]$ & $e\ell(\ell^2+1)$& $\ell^4-\ell^3+\ell^2-\ell$\\
Type V&$[2][2][2]$ & $e \ell(\ell+1)$ &  $\ell^3-\ell$  &  \\
Type VI&$[2][2][1][1]$ & $(\ell-1) \ell(\ell+1)$  & $\ell^3-\ell$  &  \\
Type VII&$[2][1][1][1][1]$ & $\lcm(e_r\ell^2,e(\ell+1))$  & $(\ell-1)\ell^2(\ell+1)/2$  &  \\
Type VIII&$[1][1][1][1][1][1]$ & $e_r\ell^2$ & $(\ell-1)\ell^2$  &  \\
\bottomrule
\end{tabular} 
\caption{ Upper bounds of extension degrees.}\label{tabla2}
\end{table}
\end{center}

\subsection*{Type I}
In this type, \(\tilde{P}\) has six roots
\(\tilde{\alpha}, \tilde{\alpha}^\ell, \tilde{\alpha}^{\ell^2}, \tilde{\alpha}^{\ell^3}, \tilde{\alpha}^{\ell^4}, \tilde{\alpha}^{\ell^5}, \in \mathbb{F}_{\ell^6} \setminus \mathbb{F}_{\ell^2}\) and \(\mathbb{F}_{\ell^6} \setminus \mathbb{F}_{\ell^3}\),
which are all distinct.  
Therefore, \(A\) can be defined over \(\mathbb{F}_{\ell^6}\) as

\[
A =
\begin{pmatrix}
\tilde{\alpha} & 0 & 0 & 0 & 0 & 0\\
0 & \tilde{\alpha}^\ell & 0 & 0 & 0 & 0\\
0 & 0 & \tilde{\alpha}^{\ell^2} & 0 & 0 & 0\\
0 & 0 & 0 & \tilde{\alpha}^{\ell^3} & 0 & 0\\
0 & 0 & 0 & 0 & \tilde{\alpha}^{\ell^4} & 0\\
0 & 0 & 0 & 0 & 0 & \tilde{\alpha}^{\ell^5} \\
\end{pmatrix}.
\]
From Theorem \ref{roots}, we can see that

\[
\tilde{\alpha}^{\ell^3+1} = \tilde{q} \in \mathbb{F}_\ell,
\]
so that

\[
A^{\ell^3+1} =
\begin{pmatrix}
\tilde{q} & 0 & 0 & 0 & 0 & 0\\
0 & \tilde{q} & 0 & 0 & 0 & 0\\
0 & 0 & \tilde{q} & 0 & 0 & 0\\
0 & 0 & 0 & \tilde{q} & 0 & 0\\
0 & 0 & 0 & 0 & \tilde{q} & 0\\
0 & 0 & 0 & 0 & 0 & \tilde{q} \\
\end{pmatrix}.
\]
For \(D \in J[\ell]\), we have

\[
D^{q^{e(\ell^3+1)}} - D = 0,
\]
because
\[
T^{e(\ell^3+1)} = P A^{e(\ell^3+1)} P^{-1} = I_6.
\]
Therefore, we obtain
\[
J[\ell] \subset J(\mathbb{F}_{q^{e(\ell^3+1)}}) \subset J(\mathbb{F}_{q^{\ell^4 - \ell^3 + \ell - 1}}).
\]

\subsection*{Type II}
In this type, \(\tilde{P}\) has 4 roots
\(\tilde{\alpha}_1, \tilde{\alpha}_1^\ell, \tilde{\alpha}_1^{\ell^2}, \tilde{\alpha}_2, \tilde{\alpha}_2^\ell\in \mathbb{F}_{\ell^2},\tilde{\alpha}_1^{\ell^3}\in \mathbb{F}_{\ell^3} \setminus \mathbb{F}_\ell\),
which are all distinct.  
Therefore, \(A\) can be defined over \(\mathbb{F}_{\ell^2}\) as

\[
A =
\begin{pmatrix}
\tilde{\alpha}_1 & 0 & 0 & 0 & 0 & 0\\
0 & \tilde{\alpha}_1^\ell & 0 & 0 & 0 & 0\\
0 & 0 & \tilde{\alpha}_1^{\ell^2} & 0 & 0 & 0\\
0 & 0 & 0 & \tilde{\alpha}_2 & 0 & 0\\
0 & 0 & 0 & 0 & \tilde{\alpha}_2^\ell & 0\\
0 & 0 & 0 & 0 & 0 & \tilde{\alpha}_2^{\ell^2}
\end{pmatrix}.
\]
Moreover,
\[
N_{\mathbb{F}_{l^3}/\mathbb{F}_\ell}(\tilde{\alpha}_i) = \tilde{\alpha}_i^{\ell^2 + \ell + 1} \in \mathbb{F}_\ell
\quad (i = 1, 2).
\]
It follows that \(A^{(\ell - 1)(\ell^2 + \ell + 1)} = I_6\), and therefore
\[
J[\ell] \subset J(\mathbb{F}_{q^{\ell^3 - 1}}).
\]

\subsection*{Type III}
In this type, \(\tilde{P}\) has 4 roots 
\(\tilde{\alpha}_1, \tilde{\alpha}_1^\ell, \tilde{\alpha}_1^{\ell^2}, \in \mathbb{F}_{\ell^4} \setminus \mathbb{F}_{\ell^2}\), and $\tilde{\alpha}_2, \tilde{\alpha}_2^\ell \in \mathbb{F}_{\ell^2} \setminus \mathbb{F}_{\ell}$ 
which are all distinct.  
Therefore, \(A\) can be defined over \(\mathbb{F}_{\ell^4}\) as

\[
A =
\begin{pmatrix}
\tilde{\alpha}_1 & 0 & 0 & 0 & 0 & 0\\
0 & \tilde{\alpha}_1^\ell & 0 & 0 & 0 & 0\\
0 & 0 & \tilde{\alpha}_1^{\ell^2} & 0 & 0 & 0\\
0 & 0 & 0 & \tilde{\alpha}_1^{\ell^3} & 0 & 0\\
0 & 0 & 0 & 0 & \tilde{\alpha}_2 & 0\\
0 & 0 & 0 & 0 & 0 & \tilde{\alpha}_2^{\ell}
\end{pmatrix}.
\]
 Let $A_4$ (respectively. $A_2$) denote the companion matrix of the irreducible factor
of degree $4$ (respectively $2$) of \(\tilde P\).
It follows that \(A_4^{(\ell - 1)(\ell^2 + 1)} = I_4\) and \(A_2^{(\ell - 1)(\ell + 1)} = I_2\),  therefore taking the least common multiple of these two orders yields

\[
J[\ell] \subset J(\mathbb{F}_{q^{\lcm(\ell^2+1,\ell+1)}}) \subset J(\mathbb{F}_{q^{(\ell^4-1)/2}}) .
\]

\subsection*{Type IV}
In this type, \(\tilde{P}\) has 4 roots 
\(\tilde{\alpha}_1, \tilde{\alpha}_1^\ell, \tilde{\alpha}_1^{\ell^2},\tilde{\alpha}_1^{\ell^3} \in \mathbb{F}_{\ell^4} \setminus \mathbb{F}_{\ell^2}\), and 2 roots $\tilde{\alpha}_2, \tilde{\alpha}_2^\ell \in \mathbb{F}_{\ell^2} \setminus \mathbb{F}_{\ell}$ 
which are all distinct.  
Therefore, \(A\) can be defined over \(\mathbb{F}_{\ell^4}\) as

\[
(i)\quad A =
\begin{pmatrix}
\tilde{\alpha}_1 & 0 & 0 & 0 & 0 & 0\\
0 & \tilde{\alpha}_1^\ell & 0 & 0 & 0 & 0\\
0 & 0 & \tilde{\alpha}_1^{\ell^2} & 0 & 0 & 0\\
0 & 0 & 0 & \tilde{\alpha}_1^{\ell^3} & 0 & 0\\
0 & 0 & 0 & 0 & \tilde{\alpha}_2 & 0\\
0 & 0 & 0 & 0 & 0 & \tilde{\alpha}_2
\end{pmatrix},\quad (ii)\quad A =
\begin{pmatrix}
\tilde{\alpha}_1 & 0 & 0 & 0 & 0 & 0\\
0 & \tilde{\alpha}_1^\ell & 0 & 0 & 0 & 0\\
0 & 0 & \tilde{\alpha}_1^{\ell^2} & 0 & 0 & 0\\
0 & 0 & 0 & \tilde{\alpha}_1^{\ell^3} & 0 & 0\\
0 & 0 & 0 & 0 & \tilde{\alpha}_2 & 1\\
0 & 0 & 0 & 0 & 0 & \tilde{\alpha}_2
\end{pmatrix},
\]
\[
(iii) \quad A =
\begin{pmatrix}
\tilde{\alpha}_1 & 0 & 0 & 0 & 0 & 0\\
0 & \tilde{\alpha}_1^\ell & 0 & 0 & 0 & 0\\
0 & 0 & \tilde{\alpha}_1^{\ell^2} & 0 & 0 & 0\\
0 & 0 & 0 & \tilde{\alpha}_1^{\ell^3} & 0 & 0\\
0 & 0 & 0 & 0 & \tilde{\alpha}_2 & 0\\
0 & 0 & 0 & 0 & 0 & \tilde{\alpha}_3
\end{pmatrix},
\]
As in Type III, taking the least common multiple of these two orders yields
\[
J[\ell] \subset J(\mathbb{F}_{q^{\lcm(\ell^2+1,\ell+1)}}) \subset J(\mathbb{F}_{q^{(\ell^4-1)/2}}) .
\]

\subsection*{Type V}
In this type, $\tilde{P}$  has $6$ roots in
$\mathbb{F}_{\ell^2} \setminus \mathbb{F}_\ell$.
The cases  $[2]^3$,  $[2]^2[2]$  and  $[2][2][2]$  are possible.
Because the cases  $[2]^3$ and  $[2]^2[2]$  have roots with multiplicity greater than~$1$,
the corresponding matrices may admit a non-trivial nilpotent part,
which can increase the extension degree.
We therefore analyze the worst cases.
Here $\tilde{\alpha}$  and  $\tilde{\alpha}_1$ denote elements of
$\mathbb{F}_{\ell^2}\setminus\mathbb{F}_\ell$,
and $\tilde{\alpha}^\ell$ and $\tilde{\alpha}_1^\ell$ their Galois conjugates. Among all configurations compatible with this factorization pattern,
cases (i) and (ii) yield the largest possible nilpotent blocks and therefore
lead to the largest extension degree.

\[
(i)\quad A =
\begin{pmatrix}
\tilde{\alpha} & 1 & 0 & 0 & 0 & 0\\
0 & \tilde{\alpha} & 0 & 0 & 0 & 0\\
0 & 0 & \tilde{\alpha}^\ell & 1 & 0 & 0\\
0 & 0 & 0 & \tilde{\alpha}^{\ell} & 0 & 0\\
0 & 0 & 0 & 0 & \tilde{\alpha}_1 & 0\\
0 & 0 & 0 & 0 & 0 & \tilde{\alpha}_1^{\ell}
\end{pmatrix},\quad (ii)\quad A =
\begin{pmatrix}
\tilde{\alpha} & 1 & 0 & 0 & 0 & 0\\
0 & \tilde{\alpha} & 1 & 0 & 0 & 0\\
0 & 0 & \tilde{\alpha} & 0 & 0 & 0\\
0 & 0 & 0 & \tilde{\alpha}^{\ell} & 1 & 0\\
0 & 0 & 0 & 0 & \tilde{\alpha}^{\ell} & 1\\
0 & 0 & 0 & 0 & 0 & \tilde{\alpha}^{\ell}
\end{pmatrix},
\]

\begin{itemize}
    \item[(i)] In case (i) \[
A^k =
\begin{pmatrix}
\tilde{\alpha}^{k} & k\tilde{\alpha}^{k-1} & 0 & 0 & 0 & 0\\
0 & \tilde{\alpha}^{k} & 0 & 0 & 0 & 0\\
0 & 0 & \tilde{\alpha}^{\ell k} & k\tilde{\alpha}^{\ell(k-1)} & 0 & 0\\
0 & 0 & 0 & \tilde{\alpha}^{\ell k} & 0 & 0\\
0 & 0 & 0 & 0 & \tilde{\alpha}_1^{k} & 0\\
0 & 0 & 0 & 0 & 0 & \tilde{\alpha}_1^{\ell k}
\end{pmatrix}.
\]
$A^{e\ell(\ell+1)} = I_{6}$
 or 
$A^{e_n\ell(\ell+1)} = I_{6}  $. 
 Therefore we have
\[
J[\ell] \subset J(\mathbb{F}_{q^{e\ell(\ell+1)}}) \subset J(\mathbb{F}_{q^{\ell^3-\ell}}).
\]
or \[
J[\ell] \subset J(\mathbb{F}_{q^{e_n\ell(\ell+1)}}) \subset J(\mathbb{F}_{q^{\ell^3-\ell}}).
\]
 \item[(ii)] In case (ii) 
 \[
A^k =
\begin{pmatrix}
\tilde{\alpha}^{k} &
\binom{k}{1}\tilde{\alpha}^{k-1} &
\binom{k}{2}\tilde{\alpha}^{k-2} &
0 & 0 & 0 \\

0 &
\tilde{\alpha}^{k} &
\binom{k}{1}\tilde{\alpha}^{k-1} &
0 & 0 & 0 \\

0 & 0 &
\tilde{\alpha}^{k} &
0 & 0 & 0 \\[0.4em]

0 & 0 & 0 &
\tilde{\alpha}^{lk} &
\binom{k}{1}\tilde{\alpha}^{l(k-1)} &
\binom{k}{2}\tilde{\alpha}^{l(k-2)} \\

0 & 0 & 0 & 0 &
\tilde{\alpha}^{lk} &
\binom{k}{1}\tilde{\alpha}^{l(k-1)} \\

0 & 0 & 0 & 0 & 0 &
\tilde{\alpha}^{lk}
\end{pmatrix}.
\]
Therefore,
\[
J[\ell] \subset J(\mathbb{F}_{q^{e\ell(\ell+1)}}) \subset J(\mathbb{F}_{q^{\ell^3-\ell}}) .
\]
\begin{remark}
If $\ell= 2$, then
    \[
J[\ell] \subset J(\mathbb{F}_{q^{e\ell^2(\ell+1)}}) \subset J(\mathbb{F}_{q^{\ell^4-\ell^2}}) .
\]
\end{remark} 
\end{itemize}

\subsection*{Type VI}
In this type, $\tilde{P}$  has 4 roots in $ \mathbb{F}_{\ell^2} \setminus \mathbb{F}_\ell$ and 2 roots in $\mathbb{F}_\ell$.   
The cases  $[2]^2[1]^2, [2]^2[1][1]$, $ [2][2][1]^2,[2][2][1][1] $  are possible.
Because the cases $[2]^2[1]^2,[2]^2[1][1], [2][2][1]^2$  have roots with multiplicity greater than 1. The possible matrices could have a nilpotent part that increases the bound, so we will analyze the worst cases.
 $\tilde{\alpha}, \tilde{\alpha}^\ell, \tilde{\alpha}, \tilde{\alpha}^\ell, \tilde{\alpha}_1, \tilde{\alpha}_1, \in  \mathbb{F}_\ell$ 

\[
 A =
\begin{pmatrix}
\tilde{\alpha} & 1 & 0 & 0 & 0 & 0\\
0 & \tilde{\alpha} & 0 & 0 & 0 & 0\\
0 & 0 & \tilde{\alpha}^\ell & 1 & 0 & 0\\
0 & 0 & 0 & \tilde{\alpha}^{\ell} & 0 & 0\\
0 & 0 & 0 & 0 & \tilde{\alpha}_1 & 1\\
0 & 0 & 0 & 0 & 0 & \tilde{\alpha}_1
\end{pmatrix}
\]

\[
A^k =
\begin{pmatrix}
\tilde{\alpha}^{k} & k\tilde{\alpha}^{k-1} & 0 & 0 & 0 & 0\\
0 & \tilde{\alpha}^{k} & 0 & 0 & 0 & 0\\
0 & 0 & \tilde{\alpha}^{\ell k} & k\tilde{\alpha}^{l(k-1)} & 0 & 0\\
0 & 0 & 0 & \tilde{\alpha}^{\ell k} & 0 & 0\\
0 & 0 & 0 & 0 & \tilde{\alpha}_1^{k} & k\tilde{\alpha}_1^{k-1}\\
0 & 0 & 0 & 0 & 0 & \tilde{\alpha}_1^{k}
\end{pmatrix}.
\]
therefore
\[
J[\ell] \subset J(\mathbb{F}_{q^{e\ell(\ell+1)}}) \subset J(\mathbb{F}_{q^{\ell^3-\ell}}) .
\]
\subsection*{Type VII}
In this type, $\tilde{P}$ has 2 roots $\tilde{\alpha_1},\tilde{\alpha_1}^{\ell}$
$\in \mathbb{F}_{\ell^2}\setminus \mathbb{F}_{\ell}$   and a
quadruple root $\tilde{\alpha}\in \mathbb{F}_{\ell} $. Therefore A can be given over $\mathbb{F}_{\ell^2} $ in the worst case as either
\[
A =
\begin{pmatrix}
\tilde{\alpha} & 1 & 0 & 0 & 0 & 0\\
0 & \tilde{\alpha} & 1 & 0 & 0 & 0\\
0 & 0 & \tilde{\alpha} & 1 & 0 & 0\\
0 & 0 & 0 & \tilde{\alpha} & 0 & 0\\
0 & 0 & 0 & 0 & \tilde{\alpha}_1 & 0\\
0 & 0 & 0 & 0 & 0 & \tilde{\alpha}_1^{l}
\end{pmatrix}.
\]
\[
A^k =
\begin{pmatrix}
\tilde{\alpha}^{k} &
\binom{k}{1}\tilde{\alpha}^{k-1} &
\binom{k}{2}\tilde{\alpha}^{k-2} &
\binom{k}{3}\tilde{\alpha}^{k-3} &
0 & 0
\\
0 &
\tilde{\alpha}^{k} &
\binom{k}{1}\tilde{\alpha}^{k-1} &
\binom{k}{2}\tilde{\alpha}^{k-2} &
0 & 0
\\
0 & 0 &
\tilde{\alpha}^{k} &
\binom{k}{1}\tilde{\alpha}^{k-1} &
0 & 0
\\
0 & 0 & 0 &
\tilde{\alpha}^{k} &
0 & 0
\\
0 & 0 & 0 & 0 &
\tilde{\alpha}_1^{k} &
0
\\
0 & 0 & 0 & 0 & 0 &
\tilde{\alpha}_1^{lk}
\end{pmatrix}
\]
holds for any $k \in \mathbb{N}$. Therefore we have, for  $ \ell \not\in\{2, 3\}$,
\begin{equation*}
    J[\ell] \subset J (\mathbb{F}_{q^{\lcm(e_r\ell,e(\ell+1))}} ) ,
\end{equation*}
and for $\ell\in \{2,3\}$
\begin{equation*}
    J[\ell] \subset J (\mathbb{F}_{q^{\lcm(e_r\ell^2,e(\ell+1))}} ). 
\end{equation*}
\subsection*{Type VIII}
In this type, $\tilde{P}$ has a sixfold  root $\tilde{\alpha}=\sqrt{\tilde{q}}\in \mathbb{F}_\ell$. In the worst nilpotent case, $A$ can be given over $\mathbb{F}_\ell$ as either
\[
A =
\begin{pmatrix}
\tilde{\alpha} & 1 & 0 & 0 & 0 & 0\\
0 & \tilde{\alpha} & 1 & 0 & 0 & 0\\
0 & 0 & \tilde{\alpha} & 1 & 0 & 0\\
0 & 0 & 0 & \tilde{\alpha} & 1 & 0\\
0 & 0 & 0 & 0 & \tilde{\alpha} & 1\\
0 & 0 & 0 & 0 & 0 & \tilde{\alpha}
\end{pmatrix},
\]
and
\[
A^k =
\begin{pmatrix}
\tilde{\alpha}^{k} &
\binom{k}{1}\tilde{\alpha}^{k-1} &
\binom{k}{2}\tilde{\alpha}^{k-2} &
\binom{k}{3}\tilde{\alpha}^{k-3} &
\binom{k}{4}\tilde{\alpha}^{k-4} &
\binom{k}{5}\tilde{\alpha}^{k-5}
\\
0 &
\tilde{\alpha}^{k} &
\binom{k}{1}\tilde{\alpha}^{k-1} &
\binom{k}{2}\tilde{\alpha}^{k-2} &
\binom{k}{3}\tilde{\alpha}^{k-3} &
\binom{k}{4}\tilde{\alpha}^{k-4}
\\
0 & 0 &
\tilde{\alpha}^{k} &
\binom{k}{1}\tilde{\alpha}^{k-1} &
\binom{k}{2}\tilde{\alpha}^{k-2} &
\binom{k}{3}\tilde{\alpha}^{k-3}
\\[6pt]
0 & 0 & 0 &
\tilde{\alpha}^{k} &
\binom{k}{1}\tilde{\alpha}^{k-1} &
\binom{k}{2}\tilde{\alpha}^{k-2}
\\[6pt]
0 & 0 & 0 & 0 &
\tilde{\alpha}^{k} &
\binom{k}{1}\tilde{\alpha}^{k-1}
\\[6pt]
0 & 0 & 0 & 0 & 0 &
\tilde{\alpha}^{k}
\end{pmatrix}.
\]
holds for any $k \in \mathbb{N}$. Then, for  $ \ell \not\in\{2, 3,5\}$ we have, 
\begin{equation*}
    J[\ell] \subset J (\mathbb{F}_{q^{e_r\ell}} ) \subset J (\mathbb{F}_{q^{\ell^2-\ell}})
\end{equation*}
from $A^{e_r\ell} = I_4$, and for $\ell \in \{ 3,5\}$, 
\begin{equation*}
    J[\ell] \subset J (\mathbb{F}_{q^{e_r\ell^2}} ) \subset J (\mathbb{F}_{q^{\ell^3-\ell^2}}).
\end{equation*}
Finally, for $\ell=2$
\begin{equation*}
    J[\ell] \subset J (\mathbb{F}_{q^{e_r\ell^3}} ) \subset J (\mathbb{F}_{q^{\ell^4-\ell^3}}).
\end{equation*}
\begin{theorem}\label{teo1}
If $ \ell > 2$, there exists a positive integer \(d \leq \ell^4 - \ell^3 + \ell - 1\) such that
\[
J[\ell] \subset J(\mathbb{F}_{q^d}).
\] 
\end{theorem}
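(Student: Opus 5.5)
The natural route is to take the maximum over the case analysis of Section 4, using Table~\ref{tabla2} and the Corollary classifying the factorization types (Table~\ref{tablatipos}). Fix $\ell>2$ with $\ell\nmid q$ and write $T$ for the Frobenius matrix on $J[\ell]$. By Lemma~\ref{lemma:orbit}, every $D\in J[\ell]$ is defined over $\mathbb{F}_{q^{d}}$ as soon as $T^{d}=I_6$. It therefore suffices to show that the multiplicative order $d$ of $T$ in $GL_6(\mathbb{F}_\ell)$ satisfies $d\le \ell^4-\ell^3+\ell-1$. I will write $T=S+N$ (Jordan--Chevalley) with $S$ semisimple, $N$ nilpotent, and $SN=NS$. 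Then $\operatorname{ord}(T)=\operatorname{ord}(S)\cdot \ell^{t}$, where $\ell^{t}$ is the least power of $\ell$ with $N^{\ell^t}=0$. Since $\ell\ge 3$, we have $\ell^{t}\le \ell$ whenever the largest Jordan block has size at most $\ell$.

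\textbf{Semisimple part.} The eigenvalues of $S$ are roots of $\tilde P$. By Theorem~\ref{roots}, they come in pairs $\alpha,\tilde q/\alpha$. I bound $\operatorname{ord}(\alpha)$ type by type:
\begin{itemize}
\item For an irreducible factor of degree $6$ (Type I), the pairing forces $\alpha^{\ell^3}=\tilde q/\alpha$. Hence $\alpha^{\ell^3+1}=\tilde q$ and $\operatorname{ord}(\alpha)\mid (\ell-1)(\ell^3+1)=\ell^4-\ell^3+\ell-1$. This is the extremal value, and here $N=0$ because the roots are distinct.
\item For a degree-$4$ factor, the same argument gives $\alpha^{\ell^2+1}=\tilde q$, so the order divides $(\ell-1)(\ell^2+1)$.
\item For a degree-$3$ factor (Type II), the order divides $\ell^3-1$.
\item For degree $2$, the order divides $\ell^2-1$; for degree $1$, it divides $\ell-1$.
\end{itemize}
The order of $S$ is the lcm of these over the factors present, and I will check that it is small in the mixed cases. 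For instance, in Types III/IV one uses $\lcm((\ell-1)(\ell^2+1),\ell^2-1)=(\ell^4-1)/2$, since $\gcd(\ell^2+1,\ell+1)=2$ for odd $\ell$. In the remaining types all roots lie in $\mathbb{F}_{\ell^2}$, so the order divides $\ell^2-1$.

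\textbf{Nilpotent part.} A nonzero $N$ occurs only when $\tilde P$ has repeated roots, and then the semisimple order is at most $\ell^2-1$ (Types V--VIII), or at most $(\ell-1)(\ell^2+1)$ in Type IV with a $2\times2$ block. I bound $\ell^t$ by the Jordan block sizes. Blocks of size at most $3$ occur in Types V and VI and give $\ell^t\le\ell$, so $d\le \ell(\ell^2-1)=\ell^3-\ell$. In Type IV, $d\le \ell(\ell-1)(\ell^2+1)=\ell^4-\ell^3+\ell^2-\ell$. Blocks of size $4$ to $6$ occur only in Types VII and VIII, where all roots lie in $\mathbb{F}_{\ell^2}$. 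For $\ell\ge5$ these give $\ell^t\le\ell$ when the block size is at most $\ell$; the exception is $\ell=5$ with a size-$6$ block, which gives $\ell^2$. For $\ell=3$, blocks of size $4$ to $6$ need $\ell^t=9$. Then $d\le \ell^2(\ell^2-1)$, or $\ell^2(\ell-1)$ in Type VIII.

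\textbf{Main obstacle.} The hard step is comparing all these quantities with $\ell^4-\ell^3+\ell-1$, especially in the small primes $\ell=3,5$ and in Type IV. Type IV requires $\ell^4-\ell^3+\ell^2-\ell\le \ell^4-\ell^3+\ell-1$, i.e.\ $\ell^2-2\ell+1\le0$, which fails. I would first try to rule out a nontrivial Jordan block on the rational eigenvalue of a $[4][1]^2$ factor. The idea is to adapt the Weil pairing argument from the Lemma excluding $[1]^5[1]$: the $2$-dimensional generalized eigenspace for $\sqrt{\tilde q}$ must be symplectically orthogonal to the degree-$4$ part, hence nondegenerate. I would then check whether Frobenius acting as a symplectic similitude with multiplier $\tilde q$ forbids $\begin{pmatrix}\alpha&1\\0&\alpha\end{pmatrix}$; note that $\det=\alpha^2=\tilde q$ is consistent, so this may not work directly. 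If it does not, I would instead sharpen the semisimple bound, using that $\alpha$ has order dividing $e_r\mid \ell-1$ and that the degree-$4$ roots satisfy $\alpha^{\ell^2+1}=\tilde q$, to get $\lcm\bigl((\ell-1)(\ell^2+1),\,\ell(\ell-1)\bigr)$, or to accept the Table~\ref{tabla2} bound as written.

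For $\ell=3$ in Type VII, I need $\ell^2(\ell^2-1)=72\le 56$, which fails. The real content is to show $\ell^t\le\ell$ there, or that a $4\times4$ block at $\ell=3$ forces smaller semisimple order, using Table~\ref{tabla2}'s $\lcm(e_r\ell^2,e(\ell+1))$; this gives $\lcm(18,8)=72$ in the worst case. So this is the step I expect to need genuine care: pinning down which large Jordan blocks survive the Weil-pairing constraints for $\ell\in\{3,5\}$.
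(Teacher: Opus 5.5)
Your route is the same as the paper's: take the maximum over the type-by-type bounds of Section 4. Your decomposition $\operatorname{ord}(T)=\operatorname{ord}(S)\,\ell^{t}$ and the individual estimates you carry out are correct. But the proposal stops short of a proof. You correctly find that Type IV with a nontrivial $2\times 2$ Jordan block on the rational eigenvalue gives $\ell(\ell-1)(\ell^2+1)>(\ell-1)(\ell^3+1)$, and that Type VII at $\ell=3$ gives $72>56$. None of the fallbacks you list closes these cases.

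\begin{itemize}
\item The Weil pairing cannot exclude the block. The rational generalized eigenspace is orthogonal to the quartic part, hence is a nondegenerate plane. On a symplectic plane every matrix of determinant $\tilde q$ is a similitude with multiplier $\tilde q$.
\item Your ``sharpened'' $\lcm((\ell-1)(\ell^2+1),\ell(\ell-1))$ is again $\ell(\ell-1)(\ell^2+1)$.
\item Table~\ref{tabla2} records exactly $\ell^4-\ell^3+\ell^2-\ell$ for Type IV, so accepting it does not help.
\end{itemize}
The paper's own proof has the same hole. It asserts that the maximum of Table~\ref{tabla2} is attained in Type I, although the Type IV row is larger. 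Its Type IV subsection also ignores the nilpotent matrix (ii).

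The missing idea is that in both problematic cases the repeated rational root $b$ is self-paired under $\lambda\mapsto\tilde q/\lambda$. The involution maps the roots of the quartic (resp.\ quadratic) factor among themselves, so the rational roots $\{b,b\}$ (resp.\ $\{b,b,b,b\}$) are stable, forcing $b^2=\tilde q$. Hence $\tilde q$ is a square in $\mathbb{F}_\ell^\times$ and $e\mid(\ell-1)/2$, so the uniform worst case $e=\ell-1$ is not available here.

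In Type IV this gives $\operatorname{ord}(\alpha)\mid e(\ell^2+1)$ and $\operatorname{ord}(b)\mid 2e$. Using that $\ell^2+1$ is even,
\[
\operatorname{ord}(T)\mid \lcm\bigl(e(\ell^2+1),\,2e\ell\bigr)=e\ell(\ell^2+1)\le \tfrac{1}{2}\ell(\ell-1)(\ell^2+1)<(\ell-1)(\ell^3+1).
\]
In Type VII with $\ell=3$ one gets $\tilde q=1$. The relation $\alpha_1^2=\tilde q=b^2$ would force $\alpha_1=\pm b\in\mathbb{F}_3$, so the conjugate pair must satisfy $\alpha_1^{4}=\tilde q=1$. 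Then $\operatorname{ord}(T)\mid\lcm(4,2\cdot 9)=36<56$.

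With these two cases settled, your remaining estimates (Types I--III, V, VI, VIII, and Type VII for $\ell\ge 5$) do yield $d\le \ell^4-\ell^3+\ell-1$.
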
 
\begin{proof}
The bounds obtained in Types I--VIII are summarized in Table \ref{tabla2}.
The maximal value is attained in Type~I and equals $e(\ell^3+1)$.
Since $e\mid(\ell-1)$,
\[
e(\ell^3+1)\le (\ell-1)(\ell^3+1)
=\ell^4-\ell^3+\ell-1.
\]
This proves the theorem.
\end{proof}
The following example shows that the bound of Theorem~\ref{teo1} is attained for $\ell=3$.

\begin{example}\label{example2}
Consider $p=5$ and the curve 
\begin{align*}
\C : y^2&= x^7 + x^2 + 3x + 1
\end{align*} 
over $\mathbb{F}_p$. \verde{ The characteristic polynomial of Frobenius acting on $T_{\ell}(\pi)$} is $x^6 + 2x^4 + 2x^3 + x^2 + 2$. Then, its factorization of the 3-torsion Galois orbits
is of the form $(56)^{13}$. 
\azul{Consequently, the full $3$-torsion is defined over
$\mathbb F_{p^{56}}$. Hence
\[
J[3](\mathbb F_{p^{56}})
\simeq (\mathbb Z/3\mathbb Z)^6.
\]
In particular, the bound $56=3^4-3^3+3-1$ is attained.
}
\end{example}

\begin{remark}
    The bounds obtained in genus $2$, and $3$ suggest a common underlying pattern.
Let $J$ be an abelian variety of dimension $g$ defined over a finite field, and let
$\ell \neq p$ be a prime. The Frobenius action on $J[\ell]$ is represented by a $2g \times 2g$ matrix, whose largest possible Jordan block has size at most $2g$.
We observed for hyperelliptic curves of genus $2$ and $3$ that, for sufficiently large  $\ell$, the worst case arises in two situations: when $p(x)$  is irreducible of degree $2g$, or when $g=g_1+\cdots+g_r$ (corresponding to the case where the Jacobian is isogenous to a product of Jacobians of lower-genus curves).
In these cases, the bounds are respectively $(\ell-1)(\ell^g+1)$ and $(\ell-1)\displaystyle\prod_{i=1}^r(\ell^{g_i}+1),$
both of order $O(\ell^{g+1})$.
On the other hand, the nilpotent part is annihilated by an $\ell$-power $\ell^r$. 
In particular, for $\ell > g$, a power of order $\ell^{2}>2g$ suffices to annihilate
the nilpotent contribution. As a consequence, the nilpotent part cannot dominate the
growth coming from the largest semisimple block.

This provides a heuristic explanation for the appearance of bounds of order
$O(\ell^{g+1})$ in genus $2$ and $3$. 

\end{remark}


\newpage
\section*{Appendix A}\label{appendixa}
The first column describes the factorization pattern of the characteristic polynomial of the Frobenius endomorphism. The rows of the table are then indexed by the corresponding possible decompositions of the Frobenius endomorphism on $J[\ell]$, which are given in the second column.

\begin{center}
\fontsize{6pt}{6pt}\selectfont
\renewcommand{\arraystretch}{1.4}
\begin{tabular}{|c|c|cccc|}
\hline
\textbf{Type} & \textbf{Char. pol.} & \textbf{Matrix} & && \\ \hline

I & [6] &
$\begin{pmatrix}
A_6
\end{pmatrix}$ & & &\\ \hline

II & [3][3] &
$\begin{pmatrix}
A_3 & 0 \\
0   & B_3
\end{pmatrix}$ & & &\\ \hline

III & [4][2] &
$\begin{pmatrix}
A_4 & 0 \\
0   & B_2
\end{pmatrix}$ & & &\\ \hline

IV & [4][1][1] & 
$\begin{pmatrix}
A_4 & 0 \\
0   & \begin{matrix} b & 0 \\ 0 & c \end{matrix}
\end{pmatrix}$ &&&\\ \hline
 &  $[4][1]^2$ &
$\begin{pmatrix}
A_4 & 0 \\
0   & \begin{matrix} b & 0 \\ 0 & b \end{matrix}
\end{pmatrix}$ &
$\begin{pmatrix}
A_4 & 0 \\
0   & \begin{matrix} b & 1 \\ 0 & b \end{matrix}
\end{pmatrix}$ &&
\\ \hline

V & [2][2][2]&
$\begin{pmatrix}
A_2 & 0 & 0 \\
0   & B_2 & 0 \\
0   & 0   & C_2
\end{pmatrix}$ &
 & &
\\ \hline
 &  $[2]^2[2]$ &
$\begin{pmatrix}
A_2 & 0 & 0 \\
0   & A_2 & 0 \\
0   & 0   & B_2
\end{pmatrix}$ &
$\begin{pmatrix}
A_2 & * & 0 \\
0   & A_2 & 0 \\
0   & 0   & B_2
\end{pmatrix}$ &&
 \\ \hline
&  $[2]^3$ &
$\begin{pmatrix}
A_2 & 0 & 0 \\
0   & A_2 & 0 \\
0   & 0   & A_2
\end{pmatrix}$ &
$\begin{pmatrix}
A_2 & * & * \\
0   & A_2 & * \\
0   & 0   & A_2
\end{pmatrix}$ &
$\begin{pmatrix}
A_2 & 0 & 0 \\
0   & A_2 & * \\
0   & 0   & A_2
\end{pmatrix}$ &\\ \hline

VI & [2][2][1][1] &
$\begin{pmatrix}
A_2 & 0 & 0 \\
0   & B_2 & 0 \\
0   & 0   & \begin{matrix} c & 0 \\ 0 & d \end{matrix}
\end{pmatrix}$ &&
 & \\ \hline
 &  $[2][2][1]^2$&
$\begin{pmatrix}
A_2 & 0 & 0 \\
0   & B_2 & 0 \\
0   & 0   & \begin{matrix} c & 0 \\ 0 & c \end{matrix}
\end{pmatrix}$ &
$\begin{pmatrix}
A_2 & 0 & 0 \\
0   & B_2 & 0 \\
0   & 0   & \begin{matrix} c & 1 \\ 0 & c \end{matrix}
\end{pmatrix}$ & &\\ \hline
 &  $[2]^2[1][1]$&
$\begin{pmatrix}
A_2 & 0 & 0 \\
0   & A_2 & 0 \\
0   & 0   & \begin{matrix} b & 0 \\ 0 & c \end{matrix}
\end{pmatrix}$ &
$\begin{pmatrix}
A_2 & * & 0 \\
0   & A_2 & 0 \\
0   & 0   & \begin{matrix} b & 0 \\ 0 & b \end{matrix}
\end{pmatrix}$ && \\ \hline
 &  $[2]^2[1]^2$&
$\begin{pmatrix}
A_2 & 0 & 0 \\
0   & A_2 & 0 \\
0   & 0   & \begin{matrix} b & 0 \\ 0 & b\end{matrix}
\end{pmatrix}$ &
$\begin{pmatrix}
A_2 & 0 & 0 \\
0   & A_2 & 0 \\
0   & 0   & \begin{matrix} b & 1 \\ 0 & b \end{matrix}
\end{pmatrix}$ &$\begin{pmatrix}
A_2 & * & 0 \\
0   & A_2 & 0 \\
0   & 0   & \begin{matrix} b & 0 \\ 0 & b \end{matrix}
\end{pmatrix}$ &$\begin{pmatrix}
A_2 & * & 0 \\
0   & A_2 & 0 \\
0   & 0   & \begin{matrix} b & 1 \\ 0 & b \end{matrix}
\end{pmatrix}$ \\ \hline
\end{tabular}
\end{center}

\newpage

\begin{center}
\fontsize{6pt}{6pt}\selectfont
\renewcommand{\arraystretch}{1.4}
\begin{tabular}{|c|c|cccc|}
\hline
\textbf{Type} & \textbf{Char. pol.} & \textbf{Matrix} & && \\ \hline
 VII&  $[2][1]^4$ &
$\begin{pmatrix}
A_2 & 0 \\
0   & \begin{matrix} b & 0 &0&0\\ 0 & b&0&0 \\ 0 & 0&b&0 \\ 0 & 0&0&b \end{matrix}
\end{pmatrix}$ &$\begin{pmatrix}
A_2 & 0 \\
0   & \begin{matrix} b & 1 &0&0\\ 0 & b&0&0 \\ 0 & 0&b&0 \\ 0 & 0&0&b \end{matrix}
\end{pmatrix}$ &&\\&
 &$\begin{pmatrix}
A_2 & 0 \\
0   & \begin{matrix} b & 1 &0&0\\ 0 & b&0&0 \\ 0 & 0&b&1 \\ 0 & 0&0&b \end{matrix}
\end{pmatrix}$&$\begin{pmatrix}
A_2 & 0 \\
0   & \begin{matrix} b & 1 &1&0\\ 0 & b&0&1 \\ 0 & 0&b&1 \\ 0 & 0&0&b \end{matrix}
\end{pmatrix}$&&
\\ \hline
 &  $[2][1]^2[1]^2$ &
$\begin{pmatrix}
A_2 & 0 \\
0   & \begin{matrix} b & 0 &0&0\\ 0 & b&0&0 \\ 0 & 0&c&0 \\ 0 & 0&0&c \end{matrix}
\end{pmatrix}$ &$\begin{pmatrix}
A_2 & 0 \\
0   & \begin{matrix} b & 1 &0&0\\ 0 & b&0&0 \\ 0 & 0&c&0 \\ 0 & 0&0&c \end{matrix}
\end{pmatrix}$
 &$\begin{pmatrix}
A_2 & 0 \\
0   & \begin{matrix} b & 1 &0&0\\ 0 & b&0&0 \\ 0 & 0&c&1 \\ 0 & 0&0&c\end{matrix}
\end{pmatrix}$&
\\ \hline
 &  $[2][1]^2[1][1]$ &
$\begin{pmatrix}
A_2 & 0 \\
0   & \begin{matrix} b & 0 &0&0\\ 0 & b&0&0 \\ 0 & 0&c&0 \\ 0 & 0&0&d \end{matrix}
\end{pmatrix}$ &$\begin{pmatrix}
A_2 & 0 \\
0   & \begin{matrix} b & 1 &0&0\\ 0 & b&0&0 \\ 0 & 0&c&0 \\ 0 & 0&0&d \end{matrix}
\end{pmatrix}$
 &&
\\ \hline
 &  $[2][1][1][1][1]$ &
$\begin{pmatrix}
A_2 & 0 \\
0   & \begin{matrix} b & 0 &0&0\\ 0 & c&0&0 \\ 0 & 0&d&0 \\ 0 & 0&0&e\end{matrix}
\end{pmatrix}$ &
 &&
\\ \hline
 VIII&  $[1]^2[1]^4$ &
$\begin{pmatrix}
\begin{matrix} a & 0 \\ 0 & a\end{matrix} & 0 \\
0   & \begin{matrix} a & 0 &0&0\\ 0 & b&0&0 \\ 0 & 0&b&0 \\ 0 & 0&0&b \end{matrix}
\end{pmatrix}$ &$\begin{pmatrix}
\begin{matrix} a & 0 \\ 0 & a\end{matrix}  & 0 \\
0   & \begin{matrix} b & 1 &0&0\\ 0 & b&0&0 \\ 0 & 0&b&0 \\ 0 & 0&0&b \end{matrix}
\end{pmatrix}$
 &$\begin{pmatrix}
\begin{matrix} a & 0 \\ 0 & a\end{matrix}  & 0 \\
0   & \begin{matrix} b & 1 &0&0\\ 0 & b&0&0 \\ 0 & 0&b&1 \\ 0 & 0&0&b \end{matrix}
\end{pmatrix}$ &\\ &&$\begin{pmatrix} 
\begin{matrix} a & 0 \\ 0 & a\end{matrix}  & 0 \\
0   & \begin{matrix} b & 1 &1&0\\ 0 & b&0&1 \\ 0 & 0&b&1 \\ 0 & 0&0&b \end{matrix}
\end{pmatrix}$
&
$\begin{pmatrix}
\begin{matrix} a & 1 \\ 0 & a\end{matrix} & 0 \\
0   & \begin{matrix} b & 0 &0&0\\ 0 & b&0&0 \\ 0 & 0&b&0 \\ 0 & 0&0&b \end{matrix}
\end{pmatrix}$ &$\begin{pmatrix}
\begin{matrix} a & 1 \\ 0 & a\end{matrix}  & 0 \\
0   & \begin{matrix} b & 1 &0&0\\ 0 & b&0&0 \\ 0 & 0&b&0 \\ 0 & 0&0&b \end{matrix}
\end{pmatrix}$ & \\ 
&  
 &$\begin{pmatrix}
\begin{matrix} a & 1 \\ 0 & a\end{matrix}  & 0 \\
0   & \begin{matrix} b & 1 &0&0\\ 0 & b&0&0 \\ 0 & 0&b&1 \\ 0 & 0&0&b \end{matrix}
\end{pmatrix}$&$\begin{pmatrix}
\begin{matrix} a & 1 \\ 0 & a\end{matrix}  & 0 \\
0   & \begin{matrix} b & 1 &1&0\\ 0 & b&0&1 \\ 0 & 0&b&1 \\ 0 & 0&0&b \end{matrix}
\end{pmatrix}$&&
\\ \hline
 &  $[1][1][1][1][1][1]$ &
$\begin{pmatrix}
 a & 0&0&0&0&0 \\ 0 & b & 0 &0&0&0\\
0 &0  &  c & 0 &0&0\\ 0 &0  &0 & d&0&0 \\ 0 &0  &0 & 0&e&0 \\0 &0  & 0 & 0&0&f
\end{pmatrix}$ &
 &&
 \\ \hline
\end{tabular}
\end{center}
\newpage

\begin{center}
\fontsize{6pt}{6pt}\selectfont
\renewcommand{\arraystretch}{1.4}
\begin{tabular}{|c|c|cccc|}
\hline
\textbf{Type} & \textbf{Char. pol.} & \textbf{Matrix} & && \\ \hline
 &  $[1]^2[1]^2[1]^2$ &
$\begin{pmatrix}
 a & 0 &0&0&0&0\\ 0 & a&0&0&0&0 \\ 
 0&0&b & 0 &0&0\\ 0&0&0 & b&0&0 \\ 0&0&0 & 0&c&0 \\ 0&0&0 & 0&0&c 
\end{pmatrix}$ &$\begin{pmatrix}
 a & 1 &0&0&0&0\\ 0 & a&0&0&0&0 \\ 
 0&0&b & 0 &0&0\\ 0&0&0 & b&0&0 \\ 0&0&0 & 0&c&0 \\ 0&0&0 & 0&0&c 
\end{pmatrix}$
 &&
\\ 
 &   &
$\begin{pmatrix}
 a & 1 &0&0&0&0\\ 0 & a&0&0&0&0 \\ 
 0&0&b & 1 &0&0\\ 0&0&0 & b&0&0 \\ 0&0&0 & 0&c&0 \\ 0&0&0 & 0&0&c 
\end{pmatrix}$ &$\begin{pmatrix}
 a & 1 &0&0&0&0\\ 0 & a&0&0&0&0 \\ 
 0&0&b & 1 &0&0\\ 0&0&0 & b&0&0 \\ 0&0&0 & 0&c&1 \\ 0&0&0 & 0&0&c 
\end{pmatrix}$
 &&
\\ \hline
 &  $[1]^2[1]^2[1][1]$ &
$\begin{pmatrix}
 a & 0 &0&0&0&0\\ 0 & a&0&0&0&0 \\ 
 0&0&b & 0 &0&0\\ 0&0&0 & b&0&0 \\ 0&0&0 & 0&c&0 \\ 0&0&0 & 0&0&d 
\end{pmatrix}$ &$\begin{pmatrix}
 a & 1 &0&0&0&0\\ 0 & a&0&0&0&0 \\ 
 0&0&b & 0 &0&0\\ 0&0&0 & b&0&0 \\ 0&0&0 & 0&c&0 \\ 0&0&0 & 0&0&d 
\end{pmatrix}$
 &
$\begin{pmatrix}
 a & 1 &0&0&0&0\\ 0 & a&0&0&0&0 \\ 
 0&0&b & 1 &0&0\\ 0&0&0 & b&0&0 \\ 0&0&0 & 0&c&0 \\ 0&0&0 & 0&0&d
\end{pmatrix}$ &
 
\\ \hline

 &  $[1]^2[1][1][1][1]$ &
$\begin{pmatrix}
\begin{matrix} a & 0 \\ 0 & a\end{matrix}  & 0 \\
0   & \begin{matrix} b & 0 &0&0\\ 0 & c&0&0 \\ 0 & 0&d&0 \\ 0 & 0&0&e\end{matrix}
\end{pmatrix}$ &$\begin{pmatrix}
\begin{matrix} a & 1 \\ 0 & a\end{matrix}  & 0 \\
0   & \begin{matrix} b & 0 &0&0\\ 0 & c&0&0 \\ 0 & 0&d&0 \\ 0 & 0&0&e\end{matrix}
\end{pmatrix}$ 
 &&
\\ \hline
 &  $[1]^3[1]^3$ &
$\begin{pmatrix}
\begin{matrix} a & 0&0 \\ 0 & a&0\\0 & 0&a
\end{matrix}  & 0 \\
0   & \begin{matrix} b & 0 &0\\ 0 & b&0& \\ 0 & 0&b\end{matrix}
\end{pmatrix}$ &$\begin{pmatrix}
\begin{matrix} a & 1&0 \\ 0 & a&1\\0 & 0&a
\end{matrix}  & 0 \\
0   & \begin{matrix} b & 1 &0\\ 0 & b&1& \\ 0 & 0&b\end{matrix}
\end{pmatrix}$
 &   $\begin{pmatrix}
\begin{matrix} a & 1&0 \\ 0 & a&1\\0 & 0&a
\end{matrix}  & 0 \\
0   & \begin{matrix} b & 0 &0\\ 0 & b&0& \\ 0 & 0&b\end{matrix}
\end{pmatrix}$&
\\ \hline
& $[1]^6$  &
$\begin{pmatrix}
a&1&0&0&0&0\\
0&a&1&0&0&0\\
0&0&a&1&0&0\\
0&0&0&a&1&0\\
0&0&0&0&a&1\\
0&0&0&0&0&a
\end{pmatrix}$ &
$\begin{pmatrix}
a&1&0&0&0&0\\
0&a&1&0&0&0\\
0&0&a&1&0&0\\
0&0&0&a&0&0\\
0&0&0&0&a&1\\
0&0&0&0&0&a
\end{pmatrix}$ &
$\begin{pmatrix}
a&1&0&0&0&0\\
0&a&1&0&0&0\\
0&0&a&1&0&0\\
0&0&0&a&0&0\\
0&0&0&0&a&0\\
0&0&0&0&0&a
\end{pmatrix}$& \\
& &
$\begin{pmatrix}
a&1&0&0&0&0\\
0&a&1&0&0&0\\
0&0&a&0&0&0\\
0&0&0&a&1&0\\
0&0&0&0&a&1\\
0&0&0&0&0&a
\end{pmatrix}$&
$\begin{pmatrix}
a&1&0&0&0&0\\
0&a&0&0&0&0\\
0&0&a&1&0&0\\
0&0&0&a&1&0\\
0&0&0&0&a&1\\
0&0&0&0&0&a
\end{pmatrix}$ &
$\begin{pmatrix}
a&1&0&0&0&0\\
0&a&0&0&0&0\\
0&0&a&1&0&0\\
0&0&0&a&0&0\\
0&0&0&0&a&0\\
0&0&0&0&0&a
\end{pmatrix}$ &\\
&&
$\begin{pmatrix}
a&1&0&0&0&0\\
0&a&0&0&0&0\\
0&0&a&0&0&0\\
0&0&0&a&0&0\\
0&0&0&0&a&0\\
0&0&0&0&0&a
\end{pmatrix}$ &
$\begin{pmatrix}
a&0&0&0&0&0\\
0&a&0&0&0&0\\
0&0&a&0&0&0\\
0&0&0&a&0&0\\
0&0&0&0&a&0\\
0&0&0&0&0&a
\end{pmatrix}$&&\\
\hline

\end{tabular}
\end{center}
\newpage

\newpage

\section{Appendix B}
In the following table, we give the factorization types of the 2-torsion Galois orbits according to the factorization of the characteristic polynomial over $\mathbb{F}_2$
\begin{table}[h!]
\centering
\fontsize{7pt}{7pt}\selectfont
\begin{tabular}{|c|l|l|c|}
\hline
\textbf{Type} & \textbf{Characteristic Polynomial} & \textbf{Matrix}  & \textbf{Orbits}  \\ \hline
I     & $x^6 + x^3 + 1$  &  $
\begin{pmatrix}
0 & 1 & 0 & 0 & 0 & 0 \\
0 & 0 & 1 & 0 & 0 & 0 \\
0 & 0 & 0 & 1 & 0 & 0 \\
0 & 0 & 0 & 0 & 1 & 0 \\
0 & 0 & 0 & 0 & 0 & 1 \\
1 & 0 & 0 & 1 & 0 & 0
\end{pmatrix}
$ & $(9)^7$
\\ \hline
II    & $(x^4 + x^3 + x^2 + x + 1)(x^2 + x + 1)$ &$\begin{pmatrix}
0 & 1 & 0 & 0 & 0 & 0 \\
0 & 0 & 1 & 0 & 0 & 0 \\
0 & 0 & 0 & 1 & 0 & 0 \\
1 & 1 & 1 & 1 & 0 & 0 \\
0 & 0 & 0 & 0 & 0 & 1 \\
0 & 0 & 0 & 0 & 1 & 1
\end{pmatrix}$ &$(3)(5)^3(15)^3$\\ \hline
III   & $(x^4 + x^3 + x^2 + x + 1)( x + 1)^2$ & $\begin{pmatrix}
0 & 1 & 0 & 0 & 0 & 0 \\
0 & 0 & 1 & 0 & 0 & 0 \\
0 & 0 & 0 & 1 & 0 & 0 \\
1 & 1 & 1 & 1 & 0 & 0 \\
0 & 0 & 0 & 0 & 1 & 0 \\
0 & 0 & 0 & 0 & 0 & 1
\end{pmatrix}
$ & $(1)^3(5)^{12}$\\ && $\begin{pmatrix}
0 & 1 & 0 & 0 & 0 & 0 \\
0 & 0 & 1 & 0 & 0 & 0 \\
0 & 0 & 0 & 1 & 0 & 0 \\
1 & 1 & 1 & 1 & 0 & 0 \\
0 & 0 & 0 & 0 & 1 & 1 \\
0 & 0 & 0 & 0 & 0 & 1
\end{pmatrix}
$ & $(1)(2)(5)^{6}(10)^3$ \\ \hline
IV    & $(x^3 + x^2 + 1)(x^3 + x + 1)$ &$\begin{pmatrix}
0 & 1 & 0 & 0 & 0 & 0 \\
0 & 0 & 1 & 0 & 0 & 0 \\
1 & 1 & 0 & 0 & 0 & 0 \\
0 & 0 & 0 & 0 & 1 & 0 \\
0 & 0 & 0 & 0 & 0 & 1 \\
0 & 0 & 0 & 1 & 0 & 1
\end{pmatrix}$ &$(7)^9$\\ \hline
V     & $(x^2 + x + 1)^3$ &$\begin{pmatrix}
0 & 1 & 0 & 0 & 0 & 0 \\
1 & 1 & 0 & 0 & 0 & 0 \\
0 & 0 & 0 & 1 & 0 & 0 \\
0 & 0 & 1 & 1 & 0 & 0 \\
0 & 0 & 0 & 0 & 0 & 1 \\
0 & 0 & 0 & 0 & 1 & 1
\end{pmatrix}$& $(3)^{21}$\\ &  &$\begin{pmatrix}
0 & 1 & 1 & 0 & 0 & 0 \\
1 & 1 & 0 & 1 & 0 & 0 \\
0 & 0 & 0 & 1 & 1 & 0 \\
0 & 0 & 1 & 1 & 0 & 1 \\
0 & 0 & 0 & 0 & 0 & 1 \\
0 & 0 & 0 & 0 & 1 & 1
\end{pmatrix}$& $(3)^{1}(6)^2(12)^4$\\ &  &$\begin{pmatrix}
0 & 1 & 0 & 0 & 0 & 0 \\
1 & 1 & 0 & 0 & 0 & 0 \\
0 & 0 & 0 & 1 & 1 & 0 \\
0 & 0 & 1 & 1 & 0 & 1 \\
0 & 0 & 0 & 0 & 0 & 1 \\
0 & 0 & 0 & 0 & 1 & 1
\end{pmatrix}$& $(3)^{5}6^8$\\ \hline
\end{tabular}
\end{table}

\newpage

\begin{table}[h!]
\centering
\fontsize{8pt}{8pt}\selectfont
\begin{tabular}{|c|l|l|c|}
\hline
VI    & $(x^2 + x + 1)^2(x+1)^2$ & 
 $\begin{pmatrix}
0 & 1 & 0 & 0 & 0 & 0 \\
1 & 1 & 0 & 0 & 0 & 0 \\
0 & 0 & 0 & 1 & 0 & 0 \\
0 & 0 & 1 & 1 & 0 & 0 \\
0 & 0 & 0 & 0 & 1 & 0 \\
0 & 0 & 0 & 0 & 0 & 1
\end{pmatrix}$ & $(1)^{3}(3)^{20}$\\ 
&  & 
 $\begin{pmatrix}
0 & 1 & 0 & 0 & 0 & 0 \\
1 & 1 & 0 & 0 & 0 & 0 \\
0 & 0 & 0 & 1 & 0 & 0 \\
0 & 0 & 1 & 1 & 0 & 0 \\
0 & 0 & 0 & 0 & 1 & 1 \\
0 & 0 & 0 & 0 & 0 & 1
\end{pmatrix}$ & $(1)(2)(3)^{10}(6)^5$\\ 
&  & 
 $\begin{pmatrix}
0 & 1 & 1 & 0 & 0 & 0 \\
1 & 1 & 0 & 1 & 0 & 0 \\
0 & 0 & 0 & 1 & 0 & 0 \\
0 & 0 & 1 & 1 & 0 & 0 \\
0 & 0 & 0 & 0 & 1 & 0 \\
0 & 0 & 0 & 0 & 0 & 1
\end{pmatrix}$& $(1)^3(3)^{4}(6)^8$\\
&  & 
 $\begin{pmatrix}
0 & 1 & 1 & 0 & 0 & 0 \\
1 & 1 & 0 & 1 & 0 & 0 \\
0 & 0 & 0 & 1 & 0 & 0 \\
0 & 0 & 1 & 1 & 0 & 0 \\
0 & 0 & 0 & 0 & 1 & 1 \\
0 & 0 & 0 & 0 & 0 & 1
\end{pmatrix}$& $(1)(2)(3)^{2}(6)^9$\\ \hline
VII   & $(x^2 + x + 1)(x+1)^4$ & $\begin{pmatrix}
0&1&0&0&0&0\\
1&1&0&0&0&0\\
0&0&1&0&0&0\\
0&0&0&1&0&0\\
0&0&0&0&1&0\\
0&0&0&0&0&1
\end{pmatrix}$ & $(1)^{15}(3)^{16}$\\ && $\begin{pmatrix}
0&1&0&0&0&0\\
1&1&0&0&0&0\\
0&0&1&1&0&0\\
0&0&0&1&0&0\\
0&0&0&0&1&0\\
0&0&0&0&0&1
\end{pmatrix}$ & $(1)^{7}(2)^4(3)^{8}(6)^4$\\ 
&& $\begin{pmatrix}
0&1&0&0&0&0\\
1&1&0&0&0&0\\
0&0&1&1&0&0\\
0&0&0&1&0&0\\
0&0&0&0&1&1\\
0&0&0&0&0&1
\end{pmatrix}$&$(1)^{3}(2)^6(3)^{4}(6)^6$\\ 
&& $\begin{pmatrix}
0&1&0&0&0&0\\
1&1&0&0&0&0\\
0&0&1&1&0&0\\
0&0&0&1&1&0\\
0&0&0&0&1&1\\
0&0&0&0&0&1
\end{pmatrix}$ &$(1)(2)(3)^{2}(4)^3(6)$\\ 
\hline
\end{tabular}
\end{table}

\newpage

\begin{table}[h!]
\centering
\fontsize{8pt}{8pt}\selectfont
\begin{tabular}{|c|l|l|c|}
\hline
VIII & $(x+1)^6$ &
$\begin{pmatrix}
1&1&0&0&0&0\\
0&1&1&0&0&0\\
0&0&1&1&0&0\\
0&0&0&1&1&0\\
0&0&0&0&1&1\\
0&0&0&0&0&1
\end{pmatrix}$ & $(1)(2)(4)^3(8)^6$\\
&&
$\begin{pmatrix}
1&1&0&0&0&0\\
0&1&1&0&0&0\\
0&0&1&1&0&0\\
0&0&0&1&0&0\\
0&0&0&0&1&1\\
0&0&0&0&0&1
\end{pmatrix}$ & $(1)^3(2)^6(4)^{12}$\\
& &
$\begin{pmatrix}
1&1&0&0&0&0\\
0&1&1&0&0&0\\
0&0&1&1&0&0\\
0&0&0&1&0&0\\
0&0&0&0&1&0\\
0&0&0&0&0&1
\end{pmatrix}$ & $(1)^7(2)^4(4)^{12}$\\
& &
$\begin{pmatrix}
1&1&0&0&0&0\\
0&1&1&0&0&0\\
0&0&1&0&0&0\\
0&0&0&1&1&0\\
0&0&0&0&1&1\\
0&0&0&0&0&1
\end{pmatrix}$ &$(1)^3(2)^6(4)^{12}$\\
& &
$\begin{pmatrix}
1&1&0&0&0&0\\
0&1&0&0&0&0\\
0&0&1&1&0&0\\
0&0&0&1&1&0\\
0&0&0&0&1&1\\
0&0&0&0&0&1
\end{pmatrix}$ & $(1)^7(2)^{28}$\\
& &
$\begin{pmatrix}
1&1&0&0&0&0\\
0&1&0&0&0&0\\
0&0&1&1&0&0\\
0&0&0&1&0&0\\
0&0&0&0&1&0\\
0&0&0&0&0&1
\end{pmatrix}$ & $(1)^{15}(2)^{24}$\\
& &
$\begin{pmatrix}
1&1&0&0&0&0\\
0&1&0&0&0&0\\
0&0&1&0&0&0\\
0&0&0&1&0&0\\
0&0&0&0&1&0\\
0&0&0&0&0&1
\end{pmatrix}$&$(1)^{31}(2)^{16}$\\
& &
$\begin{pmatrix}
1&0&0&0&0&0\\
0&1&0&0&0&0\\
0&0&1&0&0&0\\
0&0&0&1&0&0\\
0&0&0&0&1&0\\
0&0&0&0&0&1
\end{pmatrix}$&$(1)^{63}$\\
\hline

\end{tabular}
\caption{Characteristic polynomials for $\ell = 2$}
\end{table}

\end{document}